\newcommand{\ra}[1]{\renewcommand{\arraystretch}{#1}}
\DeclarePairedDelimiter\ceil{\lceil}{\rceil}
\definecolor{lred}{rgb}{1,0.8,0.5}
\definecolor{lblue}{rgb}{0.8,0.8,1}
\definecolor{dred}{rgb}{0.6,0,0}
\definecolor{dblue}{rgb}{0,0,0.7}
\definecolor{violet}{rgb}{0.5804,0.0000,0.8275}
\definecolor{purple}{rgb}{0.2400,0.5700,0.2500}
\definecolor{TGreen}{rgb}{0,0.50,0.10}
\def\@themcountersep{}
\newtheorem{THEO}{Theorem}[]
\newtheorem{ASSM}[THEO]{Assumption}
\newtheorem{CORO}[THEO]{Corollary}
\newtheorem{LEMM}[THEO]{Lemma}
\def\0{\mbox{\bf 0}}
\def\1{\mbox{\bf 1}}
\def\2{\mbox{\bf 2}}
\def\3{\mbox{\bf 3}}
\def\4{\mbox{\bf 4}}
\def\5{\mbox{\bf 5}}
\def\6{\mbox{\bf 6}}
\def\7{\mbox{\bf 7}}
\def\8{\mbox{\bf 8}}
\def\9{\mbox{\bf 9}}
\def\a{\mbox{\boldmath $a$}}
\def\b{\mbox{\boldmath $b$}}
\newdimen\Vhige \Vhige=0pt
\def\chige#1{{\setbox\Vhige\hbox{#1}\ifdim\ht\Vhige=1ex\accent24 #1%
  \else\ooalign{\unhbox\Vhige\crcr\hidewidth\char24\hidewidth}\fi}}
\def\e{\mbox{\boldmath $e$}}
\def\f{\mbox{\boldmath $f$}}
\def\x{\mbox{\boldmath $x$}}
\def\y{\mbox{\boldmath $y$}}
\def\y{\mbox{\boldmath $y$}}
\def\z{\mbox{\boldmath $z$}}
\def\A{\mbox{\boldmath $A$}}
\def\C{\mbox{\boldmath $C$}}
\def\D{\mbox{\boldmath $D$}}
\def\I{\mbox{\boldmath $I$}}
\def\L{\mbox{\boldmath $L$}}
\def\O{\mbox{\boldmath $O$}}
\def\S{\mbox{\boldmath $S$}}
\def\U{\mbox{\boldmath $U$}}
\def\X{\mbox{\boldmath $X$}}
\def\AC{\mbox{$\cal A$}}
\def\BC{\mbox{$\cal B$}}
\def\DC{\mbox{$\cal D$}}
\def\FC{\mbox{$\cal F$}}
\def\MC{\mbox{$\cal M$}}
\def\NC{\mbox{$\cal N$}}
\def\PC{\mbox{$\cal P$}}
\def\QC{\mbox{$\cal Q$}}
\def\SC{\mbox{$\cal S$}}
\def\UC{\mbox{$\cal U$}}
\def\BSigma{\mbox{\boldmath $\Sigma$}}
\def\Real{\mbox{$\mathbb{R}$}}
\def\SMAT{\mbox{$\mathbb{S}$}}
\def\bS{\mathbb{S}}
\title{Dual Spectral Projected Gradient Method for Generalized Log-det Semidefinite Programming}
\author{Charles Namchaisiri\thanks{School of Computing, Tokyo Institute of Technology,  Japan. (\href{namchaisiri.c.aa@m.titech.ac.jp}{namchaisiri.c.aa@m.titech.ac.jp})}
\and Makoto Yamashita\thanks{School of Computing, Tokyo Institute of Technology,  Japan. (\href{Makoto.Yamashita@c.titech.ac.jp}{Makoto.Yamashita@c.titech.ac.jp})}  }
\numberwithin{equation}{section}
\begin{document}

\maketitle

\begin{abstract}
Log-det semidefinite programming (SDP) problems are optimization problems that often arise from Gaussian graphic models. A log-det SDP problem with an $\ell_1$-norm term has been examined in many methods, and  
the dual spectral projected gradient (DSPG) method by Nakagaki et al.~in 2020 is designed to efficiently solve the dual problem of the log-det SDP by combining a non-monotone line-search projected gradient method with the step adjustment for positive definiteness. In this paper, we extend the DSPG method for solving a generalized log-det SDP problem involving additional terms to cover more structures in Gaussian graphical models in a unified style. We establish the convergence of the proposed method to the optimal value. We conduct numerical experiments to illustrate the efficiency of the proposed method. 
\end{abstract}

\section{Introduction}\label{sec:introduction}
In this paper, we address the following log-determinant semidefinite programming (SDP) optimization problem:
\begin{equation}\label{primal}
\begin{split}
 \min_{\X\in\SMAT^n} & \ \ f(\X) := \C \bullet \X - \mu \log \det \X + \sum_{h=1}^{H} \lambda_h \|\QC_h(\X)\|_{p_h} \\
\mbox{s.t.}  & \ \  \AC(\X) = \b, \X \succ \O.
\end{split}
\tag{$\PC$}
\end{equation}
We use $\Real^n$ and $\SMAT^n$ to denote the sets of 
$n$-dimensional vectors and $n \times n$ symmetric matrices, respectively.
The inner product between 
$\C \in \SMAT^n$ and $\X \in \SMAT^n$ is defined by
$\C \bullet \X := \sum_{i=1}^n \sum_{j=1}^n C_{ij} X_{ij}$.
We use nonnegative $\mu$ and $\lambda_1, \dots, \lambda_H$ 
as weight parameters in the objective function,
${\QC}_{h}:\SMAT^n \to \Real^{n_h}$ for each $h = 1, \ldots, H$ is a linear map, $\|\y\|_{p_h}:= \left(\sum_{i=1}^{n_h} y_i^{p_h}\right)^{\frac{1}{p_h}}$ is the $\ell_{p_h}$-norm
of $\y \in \Real^{n_h}$ with $p_h \ge 1$.
In the constraints, $\AC:\SMAT^n \to \Real^{m}$ is a linear map defined by $\AC(\X) = (\A_1 \bullet \X, \A_2 \bullet \X ,\ldots, \A_m \bullet \X)^\top$ with $\A_1,\A_2,\dots,\A_m \in \SMAT^n$, and
the vector $\b \in \Real^m$
in \eqref{primal} is a given vector.
The symbol  
$\X \succ \O$ ($\X \succeq \O$) for a matrix $\X \in \SMAT^n$ denotes that 
$\X$ is a positive definite 
(positive semidefinite, respectively) matrix.

Many log-determinant SDP problems can be rewritten as the form of \eqref{primal}. A model \eqref{primal} with $H =1$ and $\QC_1(\X)$ being the vector of elements of $\X$ is equivalent to the following problem:
\begin{equation}\label{logdetsdp}
\begin{split}
 \min_{\X\in\SMAT^n} & \ \ f(\X) := \C \bullet \X - \mu \log \det \X + \lambda \sum_{i = 1}^n \sum_{j = 1}^n |X_{ij}| \\
\mbox{s.t.}  & \ \  \AC(\X) = \b, \X \succ \O.
\end{split}
\end{equation}
Furthermore, when $\lambda = 0$ and the linear constraint is $X_{ij} = 0$ for $(i,j) \in \Omega \subseteq \{ (i,j) \ | \ 1 \le i < j \le n \}$, \eqref{logdetsdp} turns into the Gaussian graphical models \cite{lauritzen1996graphical}, which corresponds to the graphical interpretation of sparse covariance selection model \cite{dempster1972covariance}.

The model \eqref{primal} also covers the following problem in 
Lin et al.~\cite{lin2020estimation} that estimates sparse Gaussian graphical models with hidden clustering structures.
The fourth term in the objective function induces a clustering structure
of the concentration matrix.
\begin{equation}\label{hidden}
\begin{split}
 \min_{\X\in\bS^n} & \ \ f(\X) := \C \bullet \X - \mu \log \det \X + \rho \sum_{i < j}|X_{ij}| + \lambda \sum_{i < j}\sum_{s < t}|X_{ij} - X_{st}|\\
\mbox{s.t.}  & \ \  \AC (\X) = \b, \, \X \succ \O,
\end{split}
\end{equation}
%
Another important model in \eqref{primal} is the block $\ell_{\infty}$-regularized log-likelihood minimization problem in Duchi et al.~\cite{duchi2008projected} to estimate sparsity between entire blocks of variables:
\begin{align}
	 \min_{\X \in \SMAT^n} & \ \ \f(\X) := \C \bullet \X - \log \det \X + \sum_{k = 1}^{K} \lambda_{k} \max \{ |X_{ij}| \big| (i,j) \in G_{k} \},\label{duchi}
\end{align}
where entries in $\X$ are divided into disjoint subsets $G_1 , G_2 , \dots , G_K (K < n^2)$. The last term in \eqref{duchi} is the group $\ell_{\infty}$-regularized covariance selection used to enforce the sparsity between blocks. Other than this regularization, there is also the group $\ell_1$ and $\ell_2$-regularized regression, see \textit{,e.g.,} \cite{bach2008consistency,yuan2006model}. 
Extensions of model \eqref{duchi} have also been examined. Honorio et al.~\cite{honorio2010multi} proposed the multi-task structure learning problem for Gaussian graphical models that consider promoting a consistent sparseness pattern across arbitrary tasks by using the regularizer to penalize corresponding edges across the task. Yang et al.~\cite{yang2013proximal} discussed a model that replaces the last term in the objective function in \eqref{duchi} with $\ell_p$-norm for $p \in \{1, 2, \infty\}$. 

When the regularizer is restricted to the $\ell_1$-norm \eqref{logdetsdp}, many methods have been proposed. Wang et al.~\cite{WST2010} proposed a Newton-CG primal proximal point algorithm, 
and Li et al.~\cite{li2010inexact} modified an inexact primal-dual path-following interior-point algorithm to solve the log-det SDP with a large number of linear constraints. Hsie et al.~\cite{hsieh14a} proposed a quadratic approximation for sparse inverse covariance estimation (QUIC) based on the Newton method and a quadratic approximation. 
Wang et al.~\cite{Wang2016} generated an initial point of the algorithm by using a proximal augmented Lagrangian method and then computed the accurate solution by applying a Newton-CG augmented Lagrangian method. 

Nakagaki et al.~\cite{nakagaki2020dual} proposed a dual spectral projected gradient method (DSPG) for solving the dual problem of \eqref{logdetsdp}.
The method is an iterative method that uses an inexact projection to avoid the difficulty of computing the orthogonal projection to the intersection of two convex sets, while still having the advantages of the spectral projected gradient (SPG) method \cite{birgin2000nonmonotone}. In particular, an important advantage here is that it requires only the function values and the first-order derivatives, making it faster than other second-order derivatives methods. 
However, their convergence analysis heavily depended on the $\ell_1$ norm in the objective function of \eqref{logdetsdp}.
In \eqref{primal} that we address in this paper, we do not assume a specific structure of the linear map $\QC_h$  and also the number of the terms $H$ that corresponds to the number of variables in the dual problem, therefore, we cannot simply apply \cite{nakagaki2020dual} to the generalized problem \eqref{primal}.

In this paper, we extend the DSPG method to deal with the general form of log-determinant optimization problems~\eqref{primal}. 
To enhance the efficiency, we apply a similar reformulation technique as in \cite{namchaisiri2024new}.
We embed the $\ell_p$-norm structure in the objective function
in \eqref{primal} into constraints so that the objective is differentiable, and we combine the projection onto the constraints related to the $\ell_p$-norm. 

In this paper, our main contributions are as follows.
\begin{itemize}
\item We propose the generalized model \eqref{primal}, which covers many log-det models.
\item We develop a numerical method (Algorithm \ref{alg}) for solving \eqref{primal}, and present the convergence analysis of the algorithm.
\item We show the efficiency of the proposed method with numerical experiments (Section \ref{sec:numex}). For \eqref{primal} with the matrix dimension $n=2000$, the method in Duchi et al.~\cite{duchi2008projected} demanded 163.48 seconds while the proposed method consumed only 65.88 seconds to attain the same solution accuracy.
\end{itemize}

The remainder of this paper is organized as follows. We describe the structure of the dual problem and the proposed DSPG-based method (Algorithm \ref{alg}) in Section~\ref{sec:method} and discuss the convergence analysis in Section~\ref{sec:convergence} focusing on the generalized part of \eqref{primal}. In Section~\ref{sec:numex}, we present the result of numerical experiments on the log-likelihood minimization problem, block constraint problem, and multi-task structure. Finally, we conclude in Section~\ref{sec:conclusion}.

\subsection{Notation and symbols}\label{subsec:prelim}

Let $\|\X\|: = \sqrt{\X \bullet \X}$ denote the
Frobenius norm for a matrix $\X\in\bS^n$.
We also use $\|\y\|$ to represent the Euclidean norm of the vector $\y \in \Real^n$, that is, $\|\y\| := \|\y\|_2$.

Given a linear map $\AC$. We denote the adjoint operator of $\AC$ as $\AC^\top$. 
We define the operator norm of $\AC :  \SMAT^n \to \Real^m$ and $\AC^\top : \Real^m \to \SMAT^n$ as $\|\AC\| := \sup_{\X \ne \O} \left\{\frac{\|\AC(\X)\|}{\|\X\|}\right\}$ and $\|\AC^\top\| := \sup_{\bm y \ne \0} \left\{\frac{\|\AC^\top(\bm y)\|}{\|\bm y\|}\right\}$, respectively.

Let $\UC$ be the direct product space $\Real^m \times \SMAT^{n} \times \dots \times \SMAT^{n}$. We define the inner product of $\U_1 = (\y_1,\S^1_1,\dots,\S^1_H) \in \UC$ and $\U_2 = (\y_2,\S^2_1,\dots,\S^2_H) \in \UC$ by
$ \langle\U_1,\,\U_2\rangle := \bm y_1^\top\bm y_2 + \bm S^1_1\bullet\bm S^2_1 + \dots +  \bm S^1_H\bullet\bm S^2_H$.
We also define the norm of $\U \in \UC$ as $||\U||:= \sqrt{\langle\U,\,\U\rangle}$.

For $p \geq 1$ and $\lambda > 0$, we define the $\ell_p$-ball with radius $\lambda$ as
\begin{equation*}
    \BC_p^{\lambda} := \{\x \ | \ ||\x||_{p} \leq \lambda\}.
\end{equation*}

We use $P_{\SC}(\cdot)$ to denote the projection onto the convex set $\SC$, i.e.,
\begin{equation*}
P_{\SC}(\cdot) := \arg\min_{\X\in\Omega}\|\X - \cdot\|.
\end{equation*}

\section{The proposed method}\label{sec:method}

Let $\QC_h^{\top}: \Real^{n_h} \to \Real^m$ be the adjoint operators of $\QC_h$. To extend the DSPG method developed in \cite{nakagaki2020dual} for the dual problem of \eqref{logdetsdp}, 
we need the dual problem of our problem \eqref{primal}:
\begin{equation}\label{dual1}
\begin{split}
         \max_{\y \in \Real^m,\, \z_h \in \Real^{n_h} (h=1,\dots,H) } & \ \  \b^{\top}\y + \mu \log \det \left(\C - \AC^{\top}(\y) + \sum_{h=1}^{H} \QC_h^{\top}(\z_h)\right) +  n \mu - n \mu \log \mu\\
        \mbox{s.t.} & \ \ \|\z_h\|_{p_h^*} \leq \lambda_h \ (h=1,\dots,H), \, \C - \AC^{\top}(\y) + \sum_{h=1}^{H} \QC_h^{\top}(\z_h) \succ \O,
\end{split}
\end{equation}
where $\|\cdot\|_{p_h^*}$ is the dual norm of $\|\cdot\|_{p_h}$ such that $\frac{1}{p_h} + \frac{1}{p_h^*} = 1$.
More precisely, $p_h^*$ for $p_h \in [1, \infty]$ is given by
\begin{align}
  p_h^* = \begin{cases}
    \infty & \text{if} \ p_h = 1 \\
    p_h / (p_h - 1) & \text{if} \ 1 < p_h < \infty \\
    1 & \text{if} \ p_h = \infty.
  \end{cases}
\end{align}

To increase the computational efficiency, we employ a similar approach as in Namchaisiri et al.~\cite{namchaisiri2024new}. We introduce sets $\SC_h = \{ \S \ \in \SMAT^n | \ \S = \QC_h^{\top}(\z), \z \in \BC_{p_h^*}^{\lambda_h} \}$ for $h = 1,2,\dots,H$. Denoting the variables $(\y, \S_1, \dots, \S_H) \in \UC$ as one composite variable $\U$, we can rewrite \eqref{dual1} as follows:
\begin{equation}\label{dual}
\begin{split}
         \max_{\U \in \UC} & \ \  g(\U) := \b^T\y + \mu \log \det \left(\C - \AC^{\top}(\y) + \sum_{h=1}^{H} \S_h\right) +  n \mu - n \mu \log \mu\\
        \mbox{s.t.} & \ \ \S_h \in \SC_h (h=1,\dots,H), \,
\C - \AC^{\top}(\y) + \sum_{h=1}^{H} \S_h \succ \O.
\end{split}
\tag{$\DC$}
\end{equation} 
Note that the difficulty due to the $\ell_{p_h}$-norm in the problem is embedded into the set $\SC_h$. 

For the convergence analysis in the following section, we employ the same assumption as \cite{nakagaki2020dual,namchaisiri2024new}:
\begin{ASSM}\label{assp}
We assume the following statements hold for problem \eqref{primal}
and its corresponding dual \eqref{dual}:
\begin{itemize}
\item[{\rm (i)}] The set of matrices $\A_1, \A_2 , \A_m$ is linearly independent. In the other words, $\AC$ is surjective;
\item[{\rm (ii)}] There exists a strictly feasible solution $\widehat{\X} \succ \O$ of primal problem~\eqref{primal} that satisfies $\AC (\widehat{\X}) = \b$;
\item[{\rm (iii)}] There exists a composite variable $\widehat{\U}$ that satisfies the constraint of dual problem~\eqref{dual}. 
\end{itemize}
\end{ASSM}
Let $\FC$ be the feasible region of \eqref{dual}. 
We express this feasible region as the intersection 
$\FC = \MC \cap \NC$ of  $\MC := \{\U \in \UC \ | \ \y \in \Real^m, \S_h \in \SC_h (h=1,\dots,H) \}$ and $\NC := \{ \U \in \UC \ | \ \C - \AC^{\top}(\y) + \sum_{h=1}^{H} \S_h \succ \O\}$. Let $\X(\U) := \mu \left(\C - \AC^{\top}(\y) + \sum_{h=1}^{H} \S_h\right)^{-1}$. Thus, the gradient of $g(\U)$ can be expressed as
\begin{align*}
\nabla g(\U) =& \left(\nabla_{\y} g(\U),  \nabla_{\S_1} g(\U) ,\, \dots ,\,  \nabla_{\S_H} g(\U) \right) \\
=&\left(\b - \AC(\X(\U)), \X(\U) ,\, \dots ,\, \X(\U) \right).
\end{align*}

We introduce a map
\[
\BC(\U) := - \AC^{\top}(\y) + \sum_{h=1}^{H} \S_h , \ \ \mbox{where}\ \ \U = (\y, \S_1 , \, \dots \, , \S_H),
\]
and define $\U^k := \left(\y^k, \S_1^k, \dots, \S_H^k\right)$. 
For solving the dual problem~\eqref{dual}, we propose Algorithm~\ref{alg} below by extending 
the DSPG method in \cite{namchaisiri2024new}. The step length of Step 3 is the Barzilai-Borwein step \cite{barzilai1988two}. The advantages of this step length are mentioned in \cite{HAGER08,nakagaki2020dual}, in particular, the linear convergence can be obtained under a mild condition. 
\begin{algorithm}[H]
\caption{A DSPG algorithm for generalized log-det semidefinite programming}
\label{alg}
\begin{description}

\item [Step 0.] Choose  parameters $\varepsilon > 0, \tau \in (0,1), \gamma \in (0,1),  0 <\beta< 1, 0 < \alpha_{\min} < \alpha_{\max} < \infty$ and  integer $M > 0$. Take $\U^0 \in \FC$ and $\alpha_0 \in [\alpha_{\min}, \alpha_{\max}]$. Set the iteration number $k = 0$.

 \item [Step 1.] Let $\Delta \U^k_{(1)} := \left([\Delta\y^k]_{(1)}, [\Delta \S_{1}^k]_{(1)},\,\dots ,\, [\Delta\S_{H}^k]_{(1)} \right) := P_{\MC}\left(\U^k + \nabla g(\U^k)\right)  - \U^k$. If $\|\Delta \U^k_{(1)}\| \le \varepsilon$, terminate;  otherwise, go to \textbf{Step 2}.

\item [Step 2.]   Let $\D^k := \left(\Delta\y^k, \Delta\S_{1}^k,\dots, \Delta\S_{H}^k\right) := P_{\MC}\left(\U^k + \alpha_k\nabla g(\U^k)\right) - \U^k$. Let $\bm L_k$ be the Cholesky factorization of $\C + \BC(\U^k)$, that is $\bm L_k\bm L_k^\top= \C + \BC(\U^k)$, and $\theta$ be the minimum eigenvalue of $\bm L_k^{-1}\BC(\D^k) \left(\bm  L_k^\top\right)^{-1}$. Set
\begin{equation*}
 \nu_k :=
\begin{cases}
  1 & \text{if} \ \theta \ge 0,\\
 \min\{1,\, -\tau/\theta \}  & \text{otherwise}.
\end{cases}
\end{equation*}
Apply a line search to find the largest element $\sigma_k \in\{1,\,\beta,\,\beta^2, \ldots\}$ such that
 \begin{equation} 
g(\U^k + \sigma_k \nu_k\D^k) \geq  \min\limits_{[k - M + 1]_+ \le l \le k} g(\U^l) + \gamma\sigma_k \nu_k\langle\nabla g(\U^k),\,\D^k\rangle.
\end{equation}

\item [Step 3.]  Let $\U^{k+1} = \U^k + \sigma_k \nu_k\D^k$. Let $p_k:= \langle\U^{k+1} - \U^k,\, \nabla g(\U^{k+1}) - \nabla g(\U^k)\rangle$. Set
\begin{equation*}
 \alpha_{k+1} :=
\begin{cases}
  \alpha_{\max} & \text{if} \ p_k \ge 0,\\
\min\left\{\alpha_{\max},\,\max\left\{\alpha_{\min},\, -\|\U^{k+1} - \U^k\|^2/p_k \right\}\right\} & \text{otherwise}.
\end{cases}
\end{equation*}
Set $k \leftarrow k + 1$. Return to \textbf{Step 1}.

\end{description}

\end{algorithm}

From the viewpoint of the convergence analysis,
Algorithm \ref{alg} converges for any linear map $\QC_h$ 
as shown in Section \ref{sec:convergence}. This is not proved in the previous DSPG papers~\cite{nakagaki2020dual,namchaisiri2024new}.
On the other hand, 
the efficiency of Algorithm \ref{alg} depends on the computation of the projection $P_{\MC}(\cdot)$. 
The objective functions of the numerical experiments in Section~\ref{sec:numex} were chosen so that the projection onto each $\SC_h$ can be computed within appropriate computation costs.

\section{Convergence analysis}\label{sec:convergence}

We show the convergence of Algorithm~\ref{alg} to the optimal value by extending the analysis in \cite{namchaisiri2024new}. 
In particular, we focus on the proof of the part $\sum_{h=1}^{H} \S_h$ that corresponds to $\sum_{h=1}^{H} \lambda_h \|\QC_h(\X)\|_{p_h}$ in the primal generalized log-det SDPs~\eqref{primal}. 

For the convergence proof below, we will show the validity of the stopping criterion in Lemma~\ref{LEMM:optimality-condition}, the lower bound of step length that prevents the premature termination in Lemma~\ref{LEMM:steplength-bound}, and the convergence of the output sequence of Algorithm \ref{alg} to the optimal solution in Theorem~\ref{theo:dspg}. Let $\{\U^k\}$ be the sequence generated by Algorithm~\ref{alg}. We use the notation $\X^k:= \X(\U^k)$ in the proof.

We start with Lemma~\ref{LEMM:bounded} that shows the feasibility and the boundedness of the sequence generated from Algorithm~\ref{alg}.
We introduce a level set
\begin{equation*}
\mathcal{L} :=  \left\{\U \in \FC : \ g(\U) \ge g(\U^0)\right\}.
\end{equation*}

\begin{LEMM}\label{LEMM:bounded}
  {\upshape \cite[Lemma~3.1]{namchaisiri2024new}}$\{\U^k\}\subseteq\mathcal{L}$ and $\{\U^k\}$ is bounded. 
\end{LEMM}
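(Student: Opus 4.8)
The statement bundles two claims: feasibility of the iterates (that $\U^k \in \FC$ for all $k$) and monotonicity in the weak sense that keeps them in the level set $\mathcal{L}$, from which boundedness follows. Since this is cited verbatim from \cite[Lemma~3.1]{namchaisiri2024new}, the plan is to re-run that argument in the present notation, checking that nothing breaks when the single $\ell_1$-type term is replaced by the sum $\sum_{h=1}^H \S_h$ with $\S_h \in \SC_h$.

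First I would establish feasibility by induction on $k$. The base case $\U^0 \in \FC$ is given in Step~0. For the inductive step, assume $\U^k \in \FC = \MC \cap \NC$. The descent direction $\D^k = P_{\MC}(\U^k + \alpha_k \nabla g(\U^k)) - \U^k$ lands us, for any $t \in [0,1]$, at a point $\U^k + t \D^k$ whose $(\y, \S_1,\dots,\S_H)$-components stay in $\MC$ because $\MC$ is convex and both $\U^k$ and $\U^k + \D^k = P_{\MC}(\cdots)$ are in $\MC$ (here I would note $\SC_h$ is convex as the image of the convex $\ell_{p_h^*}$-ball under the linear map $\QC_h^\top$). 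The positive-definiteness part $\NC$ is exactly what the $\nu_k$ safeguard in Step~2 protects: writing $\C + \BC(\U^k) = \L_k \L_k^\top$ and letting $\theta$ be the minimum eigenvalue of $\L_k^{-1}\BC(\D^k)\L_k^{-\top}$, for any step $\sigma \nu_k$ with $\sigma \le 1$ one checks $\C + \BC(\U^k + \sigma\nu_k \D^k) = \L_k(\I + \sigma\nu_k \L_k^{-1}\BC(\D^k)\L_k^{-\top})\L_k^\top$, and $\I + \sigma\nu_k(\cdot)$ is positive definite precisely because $\sigma\nu_k \theta > -1$ by the choice $\nu_k \le -\tau/\theta$ with $\tau \in (0,1)$ when $\theta < 0$. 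So $\U^{k+1} = \U^k + \sigma_k\nu_k\D^k \in \FC$.

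Next, for $\{\U^k\} \subseteq \mathcal{L}$: the line-search acceptance inequality in Step~2 gives $g(\U^{k+1}) \ge \min_{[k-M+1]_+ \le l \le k} g(\U^l) + \gamma\sigma_k\nu_k\langle \nabla g(\U^k), \D^k\rangle$; I would check $\langle \nabla g(\U^k), \D^k\rangle \ge 0$ (this is the standard property of the projected-gradient direction: $\D^k$ is the projection of a gradient step minus the base point, so it makes a nonnegative inner product with $\nabla g(\U^k)$ — one uses the obtuse-angle characterization of $P_{\MC}$), hence $g(\U^{k+1}) \ge \min_{l} g(\U^l)$ over that window. A short induction on the windowed minimum $m_k := \min_{[k-M+1]_+ \le l \le k} g(\U^l)$ then shows $m_k$ is nondecreasing, so $g(\U^k) \ge m_k \ge m_0 = g(\U^0)$, i.e. $\U^k \in \mathcal{L}$. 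Finally, boundedness of $\mathcal{L}$: under Assumption~\ref{assp}, strong duality holds, so $g$ is bounded above on $\FC$; the $-\mu\log\det$ barrier forces the argument $\C - \AC^\top(\y) + \sum_h \S_h$ to stay in a compact subset of the positive-definite cone on a superlevel set, and combined with coercivity in $\y$ coming from surjectivity of $\AC$ (Assumption~\ref{assp}(i)) plus boundedness of each $\S_h$ in the compact set $\SC_h$, the level set $\mathcal{L}$ is bounded.

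The main obstacle I anticipate is the boundedness argument: one must argue that $g(\U) \ge g(\U^0)$ caps both $\|\y\|$ and $\det(\C - \AC^\top(\y) + \sum_h \S_h)^{-1}$ simultaneously. The $\S_h$ terms are already confined to the compact sets $\SC_h$, which is the one place the generalization from $\ell_1$ to arbitrary $\QC_h$ genuinely enters; since each $\SC_h = \QC_h^\top(\BC_{p_h^*}^{\lambda_h})$ is a bounded linear image of a bounded set, this causes no trouble, and the rest of the estimate — using that $\b^\top\y$ grows at most linearly while $\mu\log\det(\cdots)$ is eventually dominated as $\|\y\| \to \infty$ along directions where $\AC^\top(\y)$ pushes the matrix toward the boundary of (or out of) the PSD cone, and that $\AC^\top$ is injective so no direction in $\y$ is "free" — is exactly the argument of \cite[Lemma~3.1]{namchaisiri2024new} and I would cite it rather than reproduce it in full.
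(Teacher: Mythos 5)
Your proposal is correct and follows essentially the same route as the source the paper relies on: the paper itself does not reprove this lemma but cites \cite[Lemma~3.1]{namchaisiri2024new}, only remarking that the surjectivity of $\AC$ and the primal strictly feasible point $\widehat{\X}$ of Assumption~\ref{assp} are the essential ingredients. Your reconstruction of the two halves is sound: the induction for $\U^k \in \FC$ (convexity of $\MC$, including convexity of each $\SC_h$ as the linear image $\QC_h^\top(\BC_{p_h^*}^{\lambda_h})$ of a ball, plus the $\nu_k$ safeguard giving $\sigma_k\nu_k\theta \ge -\tau > -1$) and the windowed-minimum argument using $\langle \nabla g(\U^k), \D^k\rangle \ge \|\D^k\|^2/\alpha_k \ge 0$ are exactly what is needed, and you correctly identify that the only point where the generalization enters is the compactness of each $\SC_h$, which is harmless. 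One structural difference: you obtain boundedness of $\{\U^k\}$ by first bounding the level set $\mathcal{L}$, whereas the paper keeps that as a separate subsequent statement (Lemma~\ref{LEMM:levelset-bounded}); your ordering is logically fine and arguably cleaner, since containment in a bounded level set immediately gives boundedness of the iterates. The only part left genuinely schematic is the level-set boundedness itself: "coercivity in $\y$ from surjectivity of $\AC$" is the right intuition, but the actual estimate hinges on pairing $\b^\top\y = \widehat{\X}\bullet\AC^\top(\y)$ with the strictly feasible primal point of Assumption~\ref{assp}(ii) to dominate the linear term by the $\log\det$ barrier; since you defer to the citation there, as the paper does, this is acceptable rather than a gap.
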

In particular, the surjectivity of $\AC$ and the existence of the primal interior feasible point $\widehat{\X}$ in Assumption~\ref{assp} play an essential part in the proof of \cite{namchaisiri2024new}.

With the boundedness of $\SC_1, \dots, \SC_H$, we also obtain the following lemma, which will be used in Lemma~\ref{LEMM:mainobjectiveconv}.
\begin{LEMM}\label{LEMM:levelset-bounded}
  {\upshape \cite[Lemma~3.3]{nakagaki2020dual}}The level set $\mathcal{L}$ is bounded. 
\end{LEMM}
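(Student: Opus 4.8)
The plan is to follow the argument of \cite[Lemma~3.3]{nakagaki2020dual}, with the single regularizer term there replaced by the finite sum $\sum_{h=1}^{H}\S_h$. The starting observation is that each set $\SC_h = \{\QC_h^{\top}(\z) : \|\z\|_{p_h^*}\le\lambda_h\}$ is the image of the bounded $\ell_{p_h^*}$-ball $\BC_{p_h^*}^{\lambda_h}$ under the linear map $\QC_h^{\top}$, hence $\SC_h$ is bounded; let $\rho>0$ be such that $\|\S_h\|\le\rho$ for every $\S_h\in\SC_h$ and every $h$. Consequently, to prove that $\mathcal{L}$ is bounded it suffices to show that the $\y$-component of every $\U\in\mathcal{L}$ lies in a bounded set, since then $\|\U\|^2 = \|\y\|^2 + \sum_{h=1}^{H}\|\S_h\|^2$ is uniformly bounded on $\mathcal{L}$.

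Next I would exploit the strictly feasible primal point $\widehat{\X}\succ\O$ from Assumption~\ref{assp}(ii), which satisfies $\AC(\widehat{\X})=\b$. Writing $\Z(\U) := \C - \AC^{\top}(\y) + \sum_{h=1}^{H}\S_h \succ \O$ for $\U\in\FC$, we have $\AC^{\top}(\y) = \C + \sum_{h=1}^{H}\S_h - \Z(\U)$, so $\b^{\top}\y = \AC(\widehat{\X})^{\top}\y = \widehat{\X}\bullet\AC^{\top}(\y) = \widehat{\X}\bullet\C + \widehat{\X}\bullet\sum_{h=1}^{H}\S_h - \widehat{\X}\bullet\Z(\U)$. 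Substituting into the definition of $g$ and using $|\widehat{\X}\bullet\C| + \bigl|\widehat{\X}\bullet\sum_{h=1}^{H}\S_h\bigr| \le \|\widehat{\X}\|(\|\C\| + H\rho)$, the level-set inequality $g(\U)\ge g(\U^0)$ yields a bound of the form $\widehat{\X}\bullet\Z(\U) - \mu\log\det\Z(\U) \le c$ for a constant $c$ depending only on $\widehat{\X}$, $\C$, $\rho$, $H$, $\mu$, $n$, and $g(\U^0)$.

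The core estimate is then to deduce that $\Z(\U)$ is bounded. Because $\widehat{\X}\succ\O$ we have $\widehat{\X}\bullet\Z(\U) = \mathrm{tr}(\widehat{\X}\Z(\U)) \ge \lambda_{\min}(\widehat{\X})\,\mathrm{tr}(\Z(\U))$ with $\lambda_{\min}(\widehat{\X})>0$, while concavity of $\log$ gives $\log\det\Z(\U) \le n\log\bigl(\mathrm{tr}(\Z(\U))/n\bigr)$; hence $\lambda_{\min}(\widehat{\X})\,t - \mu n\log(t/n) \le c$ with $t := \mathrm{tr}(\Z(\U))$, and since the linear term dominates the logarithmic one as $t\to\infty$, this forces $t$ — and therefore $\|\Z(\U)\|\le t$ — to be bounded above by a constant $c'$. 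Finally, from $\AC^{\top}(\y) = \C + \sum_{h=1}^{H}\S_h - \Z(\U)$ we get $\|\AC^{\top}(\y)\| \le \|\C\| + H\rho + c'$, and the surjectivity of $\AC$ in Assumption~\ref{assp}(i) makes $\AC^{\top}$ injective, so $\|\AC^{\top}(\y)\| \ge \delta\|\y\|$ for some $\delta>0$; this bounds $\|\y\|$ and completes the proof.

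I expect the main obstacle to be organizing the chain of inequalities cleanly — in particular, making precise the claim that $\lambda_{\min}(\widehat{\X})\,t - \mu n\log(t/n)$ is coercive in $t$ and extracting an explicit bound on $t$, and checking that the strict positive definiteness of $\widehat{\X}$ (rather than mere feasibility) is exactly what that step uses. The rest is routine, since the only genuinely new ingredient relative to \cite{nakagaki2020dual} is the uniform boundedness of the $H$ sets $\SC_1,\dots,\SC_H$, which is immediate from their description as images of bounded dual-norm balls.
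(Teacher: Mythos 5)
Your proposal is correct and follows essentially the same route as the cited proof of \cite[Lemma~3.3]{nakagaki2020dual}, which the paper only references: the paper's remark explicitly flags the two ingredients you use, namely the surjectivity of $\AC$ (to pass from a bound on $\|\AC^{\top}(\y)\|$ to a bound on $\|\y\|$) and the boundedness of the sets $\SC_1,\dots,\SC_H$. The coercivity step $\lambda_{\min}(\widehat{\X})\,t - \mu n\log(t/n)\le c$ is the standard argument and goes through exactly as you describe.
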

The proof of this lemma also utilizes 
the surjectivity of $\AC$ to derive the boundedness 
of the component of $\y$ in $\U$.

Using Lemma~\ref{LEMM:bounded} and strict concavity of $\log \det$ term in the objective function of \eqref{dual}, we can show that the eigenvalues
of $\C + \BC(\U^k)$ are sandwiched with a positive lower bound and a finite upper bound.
Since $\X^k = \mu (\C + \BC(\U^k))^{-1}$, 
we obtain 
the boundedness of the sequence $\{\X^k\}$.
\begin{CORO}\label{CORO:bound-on-X}
{\upshape \cite[Remark 3.5]{nakagaki2020dual}}
There exist bounds $\beta_{\X}^{\min}$ and $\beta_{\X}^{\max}$ such that $\O \preceq \beta_{\X}^{\min} \I \preceq \X^k \preceq \beta_{\X}^{\max} \I$ for all $k$. 
Thus $\eta_{\X}:=\sqrt{n} \beta_{\X}^{\max}$ is also a bound for $\{\X^k\}$ such that $\|\X^k\| \le \eta_{\X}$ for all $k$. 
\end{CORO}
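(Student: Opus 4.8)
The plan is to derive Corollary~\ref{CORO:bound-on-X} as a direct consequence of the boundedness already established in Lemma~\ref{LEMM:bounded} together with the structure of the objective of \eqref{dual}. First I would observe that $\{\U^k\}\subseteq\mathcal{L}$ by Lemma~\ref{LEMM:bounded}, so every iterate satisfies $g(\U^k)\ge g(\U^0)$ and, being feasible, $\C+\BC(\U^k)=\C-\AC^\top(\y^k)+\sum_{h=1}^H\S_h^k\succ\O$. Write $\W^k:=\C+\BC(\U^k)$ and let $\zeta_1^k\ge\cdots\ge\zeta_n^k>0$ be its eigenvalues; then $g(\U^k)=\b^\top\y^k+\mu\sum_{i=1}^n\log\zeta_i^k+n\mu-n\mu\log\mu$. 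Since $\{\U^k\}$ is bounded, the linear term $\b^\top\y^k$ is bounded and, by continuity of $\BC$, the Frobenius norm $\|\W^k\|$ is bounded by some constant, which gives a finite upper bound $\zeta_1^k\le\zeta^{\max}$ for the largest eigenvalue. This immediately yields the lower bound $\X^k=\mu(\W^k)^{-1}\succeq(\mu/\zeta^{\max})\I=:\beta_{\X}^{\min}\I\succ\O$.

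Next I would extract the positive lower bound on the smallest eigenvalue $\zeta_n^k$ from the level-set inequality. Rearranging $g(\U^k)\ge g(\U^0)$ gives $\mu\sum_{i=1}^n\log\zeta_i^k\ge g(\U^0)-n\mu+n\mu\log\mu-\b^\top\y^k$, and since $\b^\top\y^k$ is bounded above (by boundedness of $\{\U^k\}$) the right-hand side is bounded below by some constant $c_0$. Using the upper bound $\zeta_i^k\le\zeta^{\max}$ for the other $n-1$ eigenvalues, we get $\mu\log\zeta_n^k\ge c_0-(n-1)\mu\log\zeta^{\max}$, hence $\zeta_n^k$ is bounded below by a strictly positive constant. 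Therefore $\X^k=\mu(\W^k)^{-1}\preceq(\mu/\zeta_n^{\min})\I=:\beta_{\X}^{\max}\I$, establishing $\O\preceq\beta_{\X}^{\min}\I\preceq\X^k\preceq\beta_{\X}^{\max}\I$ for all $k$. The Frobenius bound $\|\X^k\|\le\sqrt{n}\,\beta_{\X}^{\max}=\eta_{\X}$ then follows since each eigenvalue of $\X^k$ lies in $[0,\beta_{\X}^{\max}]$, so $\|\X^k\|^2=\sum_i(\text{eig}_i\X^k)^2\le n(\beta_{\X}^{\max})^2$.

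The main subtlety — really the only place requiring care — is making sure the lower bound on $\zeta_n^k$ is uniform in $k$, i.e. that the constant $c_0$ controlling $\sum_i\log\zeta_i^k$ from below does not degrade along the sequence. This is where both conclusions of Lemma~\ref{LEMM:bounded} are used: $g(\U^k)\ge g(\U^0)$ supplies the fixed lower bound on the sum of logs, and the boundedness of $\{\U^k\}$ (which itself rests on surjectivity of $\AC$ and the existence of the primal strictly feasible $\widehat{\X}$ in Assumption~\ref{assp}) supplies the uniform bound on $|\b^\top\y^k|$ and on $\|\W^k\|$. Strict concavity of $\log\det$ is what guarantees that the feasible iterates with bounded objective cannot have eigenvalues escaping to $0$ or $\infty$; once the two-sided eigenvalue bracket on $\W^k$ is in hand, the statement for $\X^k$ is just the spectral mapping $t\mapsto\mu/t$. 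Since this corollary is quoted from \cite[Remark~3.5]{nakagaki2020dual}, I would keep the argument brief and cite that source, noting only that the generalized term $\sum_{h=1}^H\S_h$ with $\S_h\in\SC_h$ enters solely through the bounded linear map $\BC$, so the original reasoning transfers verbatim.
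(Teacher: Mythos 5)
Your argument is correct and is essentially the paper's own: the paper proves this corollary only by the sketch preceding it (boundedness of $\{\U^k\}$ from Lemma~\ref{LEMM:bounded} sandwiches the eigenvalues of $\C+\BC(\U^k)$, and $\X^k=\mu(\C+\BC(\U^k))^{-1}$ inverts the bracket) plus the citation to \cite[Remark~3.5]{nakagaki2020dual}, and your write-up fills in exactly those details — upper eigenvalue bound from boundedness of the iterates, lower eigenvalue bound from the level-set inequality $g(\U^k)\ge g(\U^0)$ combined with the upper bound on the remaining eigenvalues. No gap; the uniformity-in-$k$ issue you flag is handled correctly.
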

Furthermore, we can also show the boundedness of other components related to $\U^k$. 

\begin{LEMM}\label{LEMM:bound}
There exist bounds 
$\eta_{\X^{-1}}$ and $\eta_{ \Delta\bm y} > 0$ such that 
$\|(\X^k)^{-1}\| \le \eta_{\X^{-1}}$ and $\| \Delta \y^k \| \le \eta_{\Delta \bm y}$ for all $k$. There also exists
 $\eta_{\Delta \bm S} > 0$ that satisfies
 $\| \Delta \bm S_h^k\| \le \eta_{\Delta \bm S}$ for all $h = 1, 2, \dots, H$ and all $k$. 
\end{LEMM}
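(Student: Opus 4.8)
The plan is to establish the three bounds in turn, leveraging the boundedness results already available (Lemma~\ref{LEMM:bounded}, Lemma~\ref{LEMM:levelset-bounded}, and Corollary~\ref{CORO:bound-on-X}) together with the definitions of the iterates in Algorithm~\ref{alg}. First I would handle $\|(\X^k)^{-1}\|$: since Corollary~\ref{CORO:bound-on-X} gives $\beta_{\X}^{\min} \I \preceq \X^k$ with $\beta_{\X}^{\min} > 0$, we immediately get $(\X^k)^{-1} \preceq (\beta_{\X}^{\min})^{-1} \I$, hence $\|(\X^k)^{-1}\| \le \sqrt{n}/\beta_{\X}^{\min} =: \eta_{\X^{-1}}$. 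This is the easy part and needs only the spectral bound already proved.

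Next I would bound $\|\Delta \y^k\|$ and $\|\Delta \S_h^k\|$. By definition in Step 2, $\D^k = P_{\MC}(\U^k + \alpha_k \nabla g(\U^k)) - \U^k$, so $\Delta \y^k$ and $\Delta \S_h^k$ are the components of this difference. The key observation is that $\U^k \in \FC \subseteq \MC$ by Lemma~\ref{LEMM:bounded}, so using the nonexpansiveness of the projection $P_{\MC}$ and the fact that $\U^k = P_{\MC}(\U^k)$, we get $\|\D^k\| = \|P_{\MC}(\U^k + \alpha_k \nabla g(\U^k)) - P_{\MC}(\U^k)\| \le \alpha_k \|\nabla g(\U^k)\| \le \alpha_{\max} \|\nabla g(\U^k)\|$. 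Then I would bound $\|\nabla g(\U^k)\|$ using its explicit form $\nabla g(\U^k) = (\b - \AC(\X^k), \X^k, \dots, \X^k)$: the norm is controlled by $\|\b\| + \|\AC\|\,\eta_{\X} $ for the $\y$-component and $\eta_{\X}$ for each of the $H$ matrix components (via Corollary~\ref{CORO:bound-on-X}), giving a bound of the form $\sqrt{(\|\b\| + \|\AC\|\eta_{\X})^2 + H \eta_{\X}^2}$. Since each component norm of $\D^k$ is at most $\|\D^k\|$, this yields the desired $\eta_{\Delta \bm y}$ and $\eta_{\Delta \bm S}$ (I would take $\eta_{\Delta \bm S}$ to be the common bound $\alpha_{\max}\|\nabla g(\U^k)\|$-bound over all $h$).

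I do not anticipate a serious obstacle here: the statement is essentially a compilation of consequences of the a priori bounds, and the only mild care needed is (a) making sure the projection argument uses $\U^k \in \MC$ correctly and that nonexpansiveness applies componentwise, and (b) checking that the bound on $\|\nabla g(\U^k)\|$ genuinely uses only quantities already shown bounded (namely $\|\X^k\| \le \eta_{\X}$ from Corollary~\ref{CORO:bound-on-X}, plus the fixed data $\b$, $\AC$). If anything is delicate, it is that the bound on $\D^k$ should not secretly depend on $k$ through $\alpha_k$ — but $\alpha_k \in [\alpha_{\min}, \alpha_{\max}]$ by construction in Step 3, so this is fine. One could alternatively bound $\|\Delta \S_h^k\|$ directly from the boundedness of the sets $\SC_h$ (each $\S_h^k \in \SC_h$ which is bounded since $\QC_h^{\top}$ is a bounded operator applied to the bounded ball $\BC_{p_h^*}^{\lambda_h}$), giving $\|\Delta \S_h^k\| \le \|\S_h^{k} + (\Delta \S_h^k)\| + \|\S_h^k\| \le 2 \sup_{\S \in \SC_h}\|\S\|$; I would mention this as the cleaner route for the $\S$-components and keep the gradient-based argument as the unified one.
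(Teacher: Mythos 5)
Your proposal is correct and follows essentially the same route as the paper: the paper likewise obtains $\eta_{\X^{-1}}$ and $\eta_{\Delta\y}$ from the a priori spectral/boundedness results (citing Lemma~3.4 of \cite{namchaisiri2024new}) and bounds $\|\Delta\S_h^k\|$ by the nonexpansiveness of the projection, namely $\|P_{\SC_h}(\S_h^k+\alpha_k\X^k)-\S_h^k\|\le\|\alpha_k\X^k\|\le\alpha_{\max}\eta_{\X}$ using $\S_h^k\in\SC_h$. Your variant applies nonexpansiveness to the full composite projection $P_{\MC}$ rather than componentwise, which yields a slightly larger but equally valid constant; no gap.
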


\begin{proof}
    We can obtain $\eta_{\X^{-1}}$ and $\eta_{\Delta \y}$ by applying Lemma~3.4 in \cite{namchaisiri2024new}. For $h = 1, 2, \dots, H$, since $\U^k \in \MC \subset \FC$ from Lemma~\ref{LEMM:bounded}, we have $\bm S_h^k \in \SC_h$ for any $k\ge 1$. Therefore, due to the property of the projection, it holds that $\| \Delta \bm S_h^k\| = \left\| P_{\SC_h}\left(\bm S_h^k + \alpha_k \X^k\right) - \bm S_h^k \right\| \leq \| \alpha_k \X^k\|\leq \alpha_{\max} \eta_{\X} =: \eta_{\Delta \bm S}$.
\end{proof}

The following lemma derives an optimality condition 
for the dual problem \eqref{dual} from the viewpoint 
of the projection, and this guarantees the validity of the stopping criterion in Step 1 of Algorithm~\ref{alg}. The proofs in \cite{namchaisiri2024new}
cannot directly give this lemma, since the structure of set $\MC$ now includes multiple sets $\SC_1, \ldots, \SC_H$.

\begin{LEMM}\label{LEMM:optimality-condition}
$\U^* \in \FC$ is an optimal solution of the dual problem~\eqref{dual} if and only if there exists $\alpha > 0$ such that
\begin{equation}
P_{\MC}(\U^* + \alpha \nabla g(\U^*)) = \U^*.
\end{equation}

\end{LEMM}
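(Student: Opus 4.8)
The plan is to prove the standard characterization of optimality via the projected-gradient fixed-point condition, adapted to the fact that $g$ is concave (we are maximizing) and the feasible set $\FC = \MC \cap \NC$ is the intersection of the ``simple'' set $\MC$ and the open set $\NC$ determined by positive definiteness. The key observation, which lets us work only with $\MC$, is that at any feasible $\U^*$ the positive-definiteness constraint is inactive, so locally $\FC$ and $\MC$ coincide near $\U^*$; hence the first-order optimality condition for \eqref{dual} is exactly stationarity over the convex set $\MC$.

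First I would establish the variational inequality form of optimality. Since $g$ is concave and differentiable on the open set $\NC$, and $\U^* \in \FC = \MC \cap \NC$ with $\NC$ open, $\U^*$ is optimal for \eqref{dual} if and only if $\langle \nabla g(\U^*), \U - \U^* \rangle \le 0$ for all $\U \in \MC$ (the directions into $\NC$ impose no extra restriction because $\NC$ is open and a neighborhood of $\U^*$ in $\MC$ lies in $\NC$; concavity upgrades the local first-order condition to a global one). Next I would invoke the standard characterization of the Euclidean projection onto a closed convex set: for $\alpha > 0$, $P_{\MC}(\U^* + \alpha \nabla g(\U^*)) = \U^*$ holds if and only if $\langle (\U^* + \alpha \nabla g(\U^*)) - \U^*, \U - \U^* \rangle \le 0$ for all $\U \in \MC$, i.e. $\alpha \langle \nabla g(\U^*), \U - \U^* \rangle \le 0$ for all $\U \in \MC$. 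Since $\alpha > 0$, this is precisely the variational inequality above, and it is independent of the particular $\alpha$. This gives both directions: if $\U^*$ is optimal, then $P_{\MC}(\U^* + \alpha \nabla g(\U^*)) = \U^*$ for every $\alpha > 0$ (in particular some $\alpha > 0$); conversely, if the fixed-point equation holds for some $\alpha > 0$, the variational inequality follows, and concavity gives global optimality.

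The one technical point requiring care — and the step I expect to be the main obstacle — is justifying that the relevant set in the optimality condition is $\MC$ and not $\FC$, i.e. handling the open constraint $\NC$ correctly. This is where I would use Corollary~\ref{CORO:bound-on-X} / Lemma~\ref{LEMM:bounded}-type reasoning in spirit: at a feasible point the matrix $\C - \AC^\top(\y^*) + \sum_h \S_h^*$ is positive definite, hence remains positive definite under sufficiently small perturbations, so for $\U \in \MC$ close to $\U^*$ we have $\U \in \NC$ too; thus the feasible directions from $\U^*$ within $\FC$ are exactly those within $\MC$, and the local first-order condition over $\FC$ is the same as over $\MC$. One must also note that $\MC$ is closed and convex (it is $\Real^m$ times a product of the sets $\SC_h$, each of which is the image of the compact convex $\ell_{p_h^*}$-ball $\BC_{p_h^*}^{\lambda_h}$ under the linear map $\QC_h^\top$, hence compact and convex), so $P_{\MC}$ is well defined and single-valued and the projection characterization applies. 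The multi-set structure $\SC_1, \ldots, \SC_H$ causes no difficulty here because $\MC$ is simply their product with $\Real^m$ and the argument never needs to decompose the projection.

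Finally, I would remark that the convergence of the stopping-criterion quantity $\Delta\U^k_{(1)} = P_{\MC}(\U^k + \nabla g(\U^k)) - \U^k$ to $\bm 0$ therefore detects optimality: if the algorithm terminates with $\|\Delta\U^k_{(1)}\| \le \varepsilon$ and $\varepsilon = 0$ this is exactly the condition of the lemma with $\alpha = 1$, and for small $\varepsilon > 0$ it gives an approximately optimal point — this is the sense in which Lemma~\ref{LEMM:optimality-condition} validates Step 1 of Algorithm~\ref{alg}.
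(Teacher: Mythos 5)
Your proof is correct and follows essentially the same route as the paper's: both reduce the fixed-point equation $P_{\MC}(\U^* + \alpha\nabla g(\U^*)) = \U^*$ to a first-order stationarity condition over $\MC$ alone, and then use the openness of $\NC$ together with the concavity of $g$ to identify that condition with optimality over $\FC = \MC \cap \NC$. The only difference is presentational: the paper phrases the reduction in normal-cone language and cites a theorem on normal cones of intersections with open sets to get $N_{\MC \cap \NC}(\U^*) = N_{\MC}(\U^*)$, whereas you phrase it as a variational inequality and justify the same fact directly by observing that segments from $\U^*$ toward points of $\MC$ remain in the open set $\NC$ for sufficiently small steps.
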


\begin{proof}

We can see that the objective equation is equivalent to

\begin{equation}
\begin{split}
\U^* = \arg \min \ \left\{\frac{1}{2} \| \U - (\U^* + \alpha \nabla g(\U^*))\|^2 + \delta_{\MC} (\U)\right\}.
\end{split}\label{eq:condition-U}
\end{equation}
where $\delta_{\MC}$ is the indicator function of $\MC$. 

Let $\U^*$ be decomposed into $\U^* = (\y^*,\S_1^*,\S_2^*, \dots, \S_H^*)$.
From the definition of the projection onto $\S_h$ for each $h = 1,\dots,H$, the equality
$P_{\S_h}(\S_h^* + \alpha \nabla_{\S_h} g(\U^*)) = \S_h^*$ holds 
if and only if
\begin{equation}
\begin{split}
\S_h^* &= {\arg \min}_{\S_h \in \SC_h} \ \frac{1}{2} \| \S_h - (\S_h^* + \alpha \nabla_{\S_h} g(\U^*))\|^2 \\
&= \arg \min \ \left\{\frac{1}{2} \| \S_h - (\S_h^* + \alpha \nabla_{\S_h} g(\U^*))\|^2 + \delta_{\SC_h} (\S_h)\right\}.
\end{split}\label{eq:condition-S}
\end{equation}
From the definition $\MC := \{\U \in \UC \ | \ \y \in \Real^m, \S_h \in \SC_h (h=1,\dots,H) \}$, we can see that projection computation for each component of $\U$ to $\MC$ is independent from the other components.

Since the subgradient of the indicator function is a normal cone, \eqref{eq:condition-U} is equivalent to $0 \in \alpha \nabla g (\U^*) + N_{\MC} (\U^*)$, where $N_{\MC} (\U^*)$ denotes the normal cone of $\MC$ at $\U^*$. Since $\NC = \left\{ \U \in \UC \ | \ \C - \AC^{\top}(\y) + \sum_{h=1}^H \S_h \succ \O \right\}$, we can see that $\NC$ is an open set, which means all of the elements in $\NC$ are interior points of $\NC$. According to Theorem 3.30 in \cite{Nonsmooth2014}, we obtain that $N_{\MC} (\U^*) = N_{\MC \ \cap \ \NC}(\U^*) = N_{\FC} (\U^*)$. This implies $0 \in \alpha \nabla g (\U^*) + N_{\FC} (\U^*)$, and 
this condition is equivalent to the optimality of $\U^*$ in the dual problem~\eqref{dual}.
\end{proof}

We will show in Lemma~\ref{LEMM:Subbound} 
that the difference between $\lambda_h\|\QC_h(\X^k)\|_{p_h}$ in the primal objective function  and an inner product $ \S^k_h \bullet \X^k$ can be estimated with $\|[\Delta \S^k_h]_{(1)}\|$.
This difference is a part of the duality gap between \eqref{primal}
and \eqref{dual} in the $k$th iteration. Therefore, we employ the limit
$\liminf_{k\to \infty} \|[\Delta \S^k_h]_{(1)}\| \to 0 \ $ (which will be shown in Lemma~\ref{LEMM:liminf-delta}) to show the limit $\liminf_{k\to \infty}|g(\U^k) - g^*|$ in Lemma~\ref{LEMM:mainobjectiveconv} which leads to the convergence to the objective value.

\begin{LEMM}\label{LEMM:Subbound}
For $h = 1,2,\dots,H$, $|\lambda_h\|\QC_h(\X^k)\|_{p_h} - \S^k_h \bullet \X^k|$ is bounded by $\|[\Delta \S^k_h]_{(1)}\|$. 
More precisely, 
\[
|\lambda_h\|\QC_h(\X^k)\|_{p_h} - \S^k_h \bullet \X^k| \leq c_h\|[\Delta \S^k_h]_{(1)}\|
\]
holds for all $k$
with $c_h := \eta_{\X} + \lambda_h\sqrt{n_h}(||\QC_h|| + ||\QC_h^\top||)$.
\end{LEMM}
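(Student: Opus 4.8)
The plan is to connect the primal-side quantity $\lambda_h\|\QC_h(\X^k)\|_{p_h}$ with the dual variable $\S^k_h$ by exploiting the duality between $\|\cdot\|_{p_h}$ and $\|\cdot\|_{p_h^*}$, and then control the error that arises because $\S^k_h$ need not yet be the ``best'' dual certificate by the projection residual $\|[\Delta \S^k_h]_{(1)}\|$. Concretely, I would introduce the auxiliary point $\widetilde{\S}^k_h := P_{\SC_h}(\S^k_h + \X^k)$, so that by definition $[\Delta \S^k_h]_{(1)} = \widetilde{\S}^k_h - \S^k_h$. The triangle inequality then splits the target into
\[
|\lambda_h\|\QC_h(\X^k)\|_{p_h} - \S^k_h \bullet \X^k| \le |\lambda_h\|\QC_h(\X^k)\|_{p_h} - \widetilde{\S}^k_h \bullet \X^k| + |(\widetilde{\S}^k_h - \S^k_h) \bullet \X^k|.
\]
The second term is immediately bounded by $\|[\Delta \S^k_h]_{(1)}\|\,\|\X^k\| \le \eta_{\X}\|[\Delta \S^k_h]_{(1)}\|$ using Cauchy--Schwarz and Corollary~\ref{CORO:bound-on-X}, which accounts for the $\eta_{\X}$ part of $c_h$.

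For the first term, the key observation is that the projection of $\S^k_h + \X^k$ onto $\SC_h = \{\QC_h^\top(\z) : \|\z\|_{p_h^*} \le \lambda_h\}$ should, in the ideal case, recover a subgradient-type relation. Writing $\widetilde{\S}^k_h = \QC_h^\top(\widetilde{\z}^k_h)$ with $\|\widetilde{\z}^k_h\|_{p_h^*} \le \lambda_h$, we have $\widetilde{\S}^k_h \bullet \X^k = \QC_h^\top(\widetilde{\z}^k_h) \bullet \X^k = \widetilde{\z}^k_h^\top \QC_h(\X^k)$. Now I would like to argue that $\widetilde{\z}^k_h$ is close to a maximizer of $\z^\top \QC_h(\X^k)$ over the $\ell_{p_h^*}$-ball of radius $\lambda_h$, whose optimal value is exactly $\lambda_h\|\QC_h(\X^k)\|_{p_h}$ by the definition of the dual norm. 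The discrepancy here is controlled again by $\|[\Delta \S^k_h]_{(1)}\|$: if $\widetilde{\S}^k_h$ were genuinely optimal in the projection sense it would align with $\QC_h(\X^k)$, and the gap from optimality is governed by how far $\widetilde{\S}^k_h + \X^k$ is from actually lying in (a face of) $\SC_h$, which in turn relates to the residual. Translating back through $\QC_h$ and $\QC_h^\top$ picks up the factor $\|\QC_h\| + \|\QC_h^\top\|$, and the dimension-counting factor $\sqrt{n_h}$ comes from converting between the Euclidean norm on $\Real^{n_h}$ and the $\ell_{p_h^*}$/$\ell_{p_h}$ norms via the standard equivalence $\|\cdot\|_{p} \le \|\cdot\|$ or $\|\cdot\| \le \sqrt{n_h}\|\cdot\|_{p}$ as appropriate. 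This gives the remaining $\lambda_h\sqrt{n_h}(\|\QC_h\| + \|\QC_h^\top\|)$ contribution to $c_h$.

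The main obstacle I anticipate is making the sentence ``$\widetilde{\z}^k_h$ is an approximate maximizer with error bounded by the projection residual'' fully rigorous, since $\SC_h$ is the image of an $\ell_{p_h^*}$-ball under $\QC_h^\top$ rather than a ball itself, so the projection onto $\SC_h$ does not decompose cleanly. The cleanest route is probably to use the variational inequality characterizing $\widetilde{\S}^k_h = P_{\SC_h}(\S^k_h + \X^k)$, namely $\langle \S^k_h + \X^k - \widetilde{\S}^k_h,\, \S - \widetilde{\S}^k_h\rangle \le 0$ for all $\S \in \SC_h$, specialized to a well-chosen $\S$: take $\S = \QC_h^\top(\z^\star)$ where $\z^\star$ attains $\lambda_h\|\QC_h(\X^k)\|_{p_h} = \z^{\star\top}\QC_h(\X^k)$ with $\|\z^\star\|_{p_h^*} = \lambda_h$. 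Rearranging this inequality isolates $\X^k \bullet \widetilde{\S}^k_h$ against $\X^k \bullet \QC_h^\top(\z^\star) = \lambda_h\|\QC_h(\X^k)\|_{p_h}$, plus cross terms of the form $\langle \S^k_h - \widetilde{\S}^k_h,\, \QC_h^\top(\z^\star) - \widetilde{\S}^k_h\rangle$ which are bounded by $\|[\Delta \S^k_h]_{(1)}\|$ times a diameter-type bound on $\SC_h$; that diameter bound is where $\lambda_h$, $\sqrt{n_h}$ and $\|\QC_h^\top\|$ enter, since every element $\QC_h^\top(\z)$ of $\SC_h$ satisfies $\|\QC_h^\top(\z)\| \le \|\QC_h^\top\|\,\|\z\| \le \|\QC_h^\top\|\sqrt{n_h}\lambda_h$ (adjusting the norm conversion to whichever inequality is valid for $p_h^*$). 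After assembling the two terms and collecting constants, the stated bound with $c_h = \eta_{\X} + \lambda_h\sqrt{n_h}(\|\QC_h\| + \|\QC_h^\top\|)$ should follow; I would double-check at the end that the norm-equivalence constants have been absorbed consistently and that no case ($p_h = 1$ or $p_h = \infty$) breaks the argument.
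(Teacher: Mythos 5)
Your proposal is correct and follows the same skeleton as the paper's proof: the same auxiliary point $\hat{\S}_h^k = P_{\SC_h}(\S_h^k + \X^k)$, the same triangle-inequality split in which $|(\hat{\S}_h^k - \S_h^k)\bullet\X^k| \le \eta_{\X}\,\|[\Delta \S^k_h]_{(1)}\|$ accounts for the $\eta_{\X}$ part of $c_h$, and the same use of the projection variational inequality plus a norm bound on elements of $\SC_h$ to control the remaining term. The one genuine difference is the comparison point fed into the variational inequality: the paper first passes from $\|\cdot\|_{p_h}$ to $\|\cdot\|_{1}$ (using $\|\QC_h(\X^k)\|_{p_h}\le\|\QC_h(\X^k)\|_1$ together with the H\"older-based nonnegativity of $\lambda_h\|\QC_h(\X^k)\|_{p_h}-\S_h^k\bullet\X^k$) and then tests against $\lambda_h\QC_h^{\top}(\tilde{\e}_h^k)$ built from the sign vector of $\QC_h(\X^k)$, whereas you test directly against $\QC_h^{\top}(\z^\star)$ with $\z^\star$ an exact maximizer of $\z^{\top}\QC_h(\X^k)$ over $\BC_{p_h^*}^{\lambda_h}$. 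Your choice is arguably cleaner: it skips the $\ell_1$ reduction entirely, and $\QC_h^{\top}(\z^\star)$ is guaranteed to lie in $\SC_h$, so the variational inequality applies without further comment, while the paper's test point $\lambda_h\QC_h^{\top}(\tilde{\e}_h^k)$ belongs to $\SC_h$ only if $\|\tilde{\e}_h^k\|_{p_h^*}\le 1$, which deserves a word of justification when $p_h>1$. Two small things to tighten when you write yours out: (i) the variational inequality yields only a one-sided estimate on $\lambda_h\|\QC_h(\X^k)\|_{p_h}-\hat{\S}_h^k\bullet\X^k$, so you must record explicitly that this quantity is nonnegative (H\"older, since $\hat{\S}_h^k=\QC_h^{\top}(\hat{\z}_h^k)$ with $\|\hat{\z}_h^k\|_{p_h^*}\le\lambda_h$) before converting to an absolute value; (ii) the needed norm bound $\|\QC_h^{\top}(\z)\|\le\sqrt{n_h}\,\lambda_h\,\|\QC_h^{\top}\|$ for $\z\in\BC_{p_h^*}^{\lambda_h}$ follows from $\|\z\|\le\sqrt{n_h}\,\|\z\|_{\infty}\le\sqrt{n_h}\,\|\z\|_{p_h^*}$, valid for every $p_h^*\ge 1$, and since $\|\QC_h\|=\|\QC_h^{\top}\|$ the resulting constant $2\lambda_h\sqrt{n_h}\,\|\QC_h^{\top}\|$ coincides with the paper's $\lambda_h\sqrt{n_h}(\|\QC_h\|+\|\QC_h^{\top}\|)$, so the stated $c_h$ is recovered.
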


\begin{proof}
Since $\U^k \in \MC$, we know $\S^k_h \in \SC_h$, thus there exists $\z^k_h \in \Real^{n_h}$ such that $\|\z^k_h\|_{p_h^*} \leq \lambda_h$ and $\S^k_h = \QC_h^{\top}(\z^k_h)$. Therefore, we have 
\[
\S^k_h \bullet \X^k = \QC_h^{\top}(\z_h^k) \bullet \X^k = \QC_h(\X^k)^{\top} \z_h^k.
\]

The Holder's inequality $|\a^T \b| \leq \|\a\|_{p^*} \|\b\|_{p}$ holds for any vectors $\a,\b$ with the same length and $p \geq 1$. Thus,  $|\S_h^k \bullet \X^k| = |\QC_h(\X^k)^{\top} \z_h^k | \leq ||\z_h^k|| _{p_h^*} ||\QC_h(\X^k)||_{p_h} \leq \lambda_h||\QC_h(\X^k)||_{p_h}$, and this leads to $\lambda_h||\QC_h(\X^k)||_p - \S_h^k \bullet \X^k \geq 0$. Furthermore, we know that $||\QC_h(\X)|| _{p_h} \leq ||\QC_h(\X)||_1$ for $p_h \ge 1$, thus it holds that 
\begin{equation}\label{bound1}
 |\lambda_h \|\QC_h(\X^k)\|_{p_h} - \S^k_h \bullet \X^k| \leq | \lambda_h \|\QC_h(\X^k)\|_{1} - \S^k_h \bullet \X^k|.
\end{equation}

Let $\hat{\S}_h^k = P_{\SC_h}(\S_h^k + \X^k)$. 
Then there exists $\hat{\z}_h^k \in \Real^{n_h}$ such that $\|\hat{\z}_h^k\|_{p^*} \leq \lambda_h$ and $\hat{\S}_h^k = \QC_h^{\top}(\hat{\z}_h^k)$. Let $\tilde{\e}_h^k \in \Real^{n_h}$ be a vector whose elements are the signs of $\QC_h(\X^k)$. We have
\begin{align}
 |\lambda_h \|\QC_h(\X^k)\|_{1} - \S^k_h \bullet \X^k| 
= & \ |\lambda_h(\tilde{\e}_h^k)^{\top} \QC_h(\X^k) - \S^k_h \bullet \X^k| 
= \ |\lambda_h \QC_h^{\top} (\tilde{\e}_h^k) \bullet \X^k - \S^k_h \bullet \X^k| \\ 
\leq & \ 
| \lambda_h \QC_h^{\top} (\tilde{\e}_h^k) \bullet \X^k - \hat{\S}_h^k \bullet \X^k| + 
|(\hat{\S}_h^k - \S_h^k) \bullet \X^k|  \\
\le & \ 
|(\lambda_h \QC_h^{\top} (\tilde{\e}_h^k) - \hat{\S}_h^k) \bullet \X^k| + 
\|[\Delta \S_h^k]_{(1)} \| \cdot \|\X^k\|.
\label{bound2}
\end{align}

We can see that the second term of \eqref{bound2} is bounded by $\|[\Delta \S_h^k]_{(1)}\|$ due to $\|\X^k\| \le \eta_{\X}$ in Lemma~\ref{LEMM:bound}.  Therefore, our focus here is the first term. Due to properties P1 in {\cite[Proposition 2.1]{HAGER08}} as a property of the projection $\hat{\S}_h^k = P_{\SC_h}(\S_h^k + \X^k)$, we can derive an inequality
\begin{align}
  \left(\X^k + [\Delta \S_h^k]_{(1)}\right) \bullet \left(\lambda_h \QC_h^{\top} (\tilde{\e}_h^k) - \hat{\S}_h^k\right) = \left(\S_h^k + \X^k - \hat{\S}_h^k\right) \bullet \left(\lambda_h \QC_h^{\top} (\tilde{\e}_h^k) - \hat{\S}_h^k\right)
\leq 0.
\end{align}
This indicates
\begin{align}\label{subbound2.5}
\X^k \bullet \left(\lambda_h \QC_h^{\top} (\tilde{\e}_h^k) - \hat{\S}_h^k\right) \leq [\Delta \S_h^k]_{(1)} \bullet \left(\lambda_h \QC_h^{\top} (\tilde{\e}_h^k) - \hat{\S}_h^k\right).
\end{align}
We show the nonnegativity of $\X^k \bullet \left(\lambda_h \QC_h^{\top} (\tilde{\e}_h^k) - \hat{\S}_h^k\right) = \X^k \bullet \left(\lambda_h \QC_h^{\top} (\tilde{\e}_h^k) - \QC_h^{\top}(\hat{\z}_h^k)\right) = \QC(\X^k) \bullet (\lambda_h \tilde{\e}_h^k - \hat{\z}_h^k) = \sum_{j = 1}^{n_h} [\QC_h(\X^k)]_{j} [\lambda_h \tilde{\e}_h^k - \hat{\z}_h^k]_j.$
For each $j$, we can see that the sign of $[\lambda_h \tilde{\e}_h^k - \hat{\z}_h^k]_j$ is the same as $\tilde{\e}_h^k$ because $|[\hat{\z}_h^k]_j| \leq \|{\hat{\z}}^k\|_{p_h^*} \leq \lambda_h$. Since $\tilde{\e}_h^k$ is the sign of $[\QC_h(\X^k)]_{j}$, we obtain $[\QC_h(\X^k)]_{j} [\lambda_h \tilde{\e}_h^k - \hat{\z}_h^k]_{j} \geq 0$, hence,
\begin{align}
    \X^k \bullet \left(\lambda_h \QC_h^{\top} (\tilde{\e}_h^k) - \hat{\S}_h^k\right) = 
    \sum_{j = 1}^{n_h} [\QC_h(\X^k)]_{j} [\lambda_h \tilde{\e}_h^k - \hat{\z}_h^k]_j
    \geq 0.
\end{align}
Applying this result into \eqref{subbound2.5} and using Lemma~\ref{LEMM:bound}, we obtain
\begin{align}\label{bound3}
|\X^k \bullet \left(\lambda_h \QC_h^{\top} (\tilde{\e}_h^k) - \hat{\S}_h^k\right)| \leq |[\Delta \S_h^k]_{(1)} \bullet \left(\lambda_h \QC_h^{\top} (\tilde{\e}_h^k) - \hat{\S}_h^k\right)|.
\end{align}
Since $\hat{\S}_h^k \in \SC_h$, we obtain the bound
\begin{align}\label{bound4}
||\hat{\S}_h^k|| & \leq ||\QC_h|| ||\hat{\z}_h^k|| \leq \sqrt{n_h}||\QC_h|| ||\hat{\z}_h^k||_{\infty} \leq \sqrt{n_h}||\QC_h|| ||\hat{\z}_h^k||_{p_h^*} \leq \sqrt{n_h} \lambda_h ||\QC_h||.
\end{align}

Using \eqref{bound4} in \eqref{bound3}, we have
\begin{align}\label{bound5}
    |\X^k \bullet \left(\lambda_h \QC_h^{\top} (\tilde{\e}_h^k) - \hat{\S}_h^k\right)| \leq (\lambda_h\sqrt{n_h}||\QC^{\top}_h|| + \lambda_h\sqrt{n_h}||\QC_h||)\|[\Delta \S_h^k]_{(1)}\|.
\end{align}

This indicates that $|\X^k \bullet \left(\lambda_h \QC_h^{\top} (\tilde{\e}_h^k) - \hat{\S}_h^k\right)|$ is bounded by $\|[\Delta \S_h^k]_{(1)}\|$.
Combining \eqref{bound1}, \eqref{bound2} and \eqref{bound5}, we can conclude this lemma with the value of $c_h = \eta_{\X} + \lambda_h\sqrt{n_h}(||\QC_h|| + ||\QC^{\top}_h||)$. 
\end{proof}


Lemma \ref{LEMM:steplength-bound} shows the lower bound of the step length, which prevents the Algorithm \ref{alg} from terminating before the stopping criterion is satisfied.

\begin{LEMM}\label{LEMM:steplength-bound}
There exists a bound $(\sigma\nu)_{\min} > 0$ such that step length $\sigma_k\nu_k > (\sigma\nu)_{\min}$ for all $k$.
\end{LEMM}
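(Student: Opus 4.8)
The plan is to bound $\sigma_k \nu_k$ from below by splitting into the factor $\nu_k$ (which controls positive definiteness) and the line-search factor $\sigma_k$ (which controls the sufficient-ascent condition), and to show each is bounded away from zero using the uniform bounds collected in Corollary~\ref{CORO:bound-on-X} and Lemmas~\ref{LEMM:bound}. First I would handle $\nu_k$: by definition $\nu_k = 1$ unless $\theta := \lambda_{\min}\big(\bm L_k^{-1}\BC(\D^k)(\bm L_k^\top)^{-1}\big) < 0$, in which case $\nu_k = \min\{1, -\tau/\theta\}$, so it suffices to show $|\theta|$ is uniformly bounded above. Since $\C + \BC(\U^k) = \bm L_k \bm L_k^\top$ has eigenvalues in $[\mu/\beta_{\X}^{\max}, \mu/\beta_{\X}^{\min}]$ by Corollary~\ref{CORO:bound-on-X}, we have $\|\bm L_k^{-1}\| \le \sqrt{\beta_{\X}^{\max}/\mu}$, hence $|\theta| \le \|\bm L_k^{-1}\|^2 \|\BC(\D^k)\| \le (\beta_{\X}^{\max}/\mu)\|\BC(\D^k)\|$. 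Now $\BC(\D^k) = -\AC^\top(\Delta\y^k) + \sum_{h=1}^H \Delta\S_h^k$, so $\|\BC(\D^k)\| \le \|\AC^\top\|\,\eta_{\Delta\bm y} + H\eta_{\Delta\bm S}$ by Lemma~\ref{LEMM:bound}. This gives a uniform upper bound $\bar\theta$ on $|\theta|$, and therefore $\nu_k \ge \min\{1, \tau/\bar\theta\} =: \nu_{\min} > 0$.

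Next I would handle $\sigma_k$. Suppose at iteration $k$ the line search rejects the trial step $\sigma = \beta^{-1}\sigma_k$ (if $\sigma_k = 1$ there is nothing to prove), i.e.
\begin{equation*}
g(\U^k + \sigma\nu_k\D^k) < \min_{[k-M+1]_+ \le l \le k} g(\U^l) + \gamma\sigma\nu_k\langle\nabla g(\U^k),\D^k\rangle \le g(\U^k) + \gamma\sigma\nu_k\langle\nabla g(\U^k),\D^k\rangle,
\end{equation*}
where the last inequality uses that $g(\U^k)$ is one of the terms in the minimum. Rearranging, $g(\U^k + \sigma\nu_k\D^k) - g(\U^k) < \gamma\sigma\nu_k\langle\nabla g(\U^k),\D^k\rangle$. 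On the other hand, $g$ is $C^2$ on the (bounded, by Lemma~\ref{LEMM:bounded} together with the line segment staying in $\NC$ via the choice of $\nu_k$) region visited, so its gradient is Lipschitz there with some constant $L$; combining the descent-lemma inequality $g(\U^k + \sigma\nu_k\D^k) - g(\U^k) \ge \sigma\nu_k\langle\nabla g(\U^k),\D^k\rangle - \tfrac{L}{2}\sigma^2\nu_k^2\|\D^k\|^2$ with the rejection inequality yields $(1-\gamma)\sigma\nu_k\langle\nabla g(\U^k),\D^k\rangle < \tfrac{L}{2}\sigma^2\nu_k^2\|\D^k\|^2$, hence $\sigma\nu_k > \frac{2(1-\gamma)\langle\nabla g(\U^k),\D^k\rangle}{L\|\D^k\|^2}$ (noting $\langle\nabla g(\U^k),\D^k\rangle > 0$ by the projection property of $\D^k$). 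Since $\sigma_k = \beta\sigma$, this gives $\sigma_k\nu_k > \frac{2\beta(1-\gamma)\langle\nabla g(\U^k),\D^k\rangle}{L\|\D^k\|^2}$.

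To close the argument I would bound $\langle\nabla g(\U^k),\D^k\rangle / \|\D^k\|^2$ from below uniformly, as is standard for the SPG step: using the defining variational inequality of the projection $\D^k = P_{\MC}(\U^k + \alpha_k\nabla g(\U^k)) - \U^k$, namely $\langle \U^k + \alpha_k\nabla g(\U^k) - (\U^k + \D^k), \U - (\U^k + \D^k)\rangle \le 0$ for all $\U \in \MC$; taking $\U = \U^k$ gives $\langle\nabla g(\U^k),\D^k\rangle \ge \tfrac{1}{\alpha_k}\|\D^k\|^2 \ge \tfrac{1}{\alpha_{\max}}\|\D^k\|^2$. Hence $\sigma_k\nu_k > \frac{2\beta(1-\gamma)}{L\alpha_{\max}}$ whenever a backtracking step was taken, and combining with $\nu_k \ge \nu_{\min}$ for the non-backtracking case, we may set $(\sigma\nu)_{\min} := \min\{\nu_{\min},\ \frac{2\beta(1-\gamma)}{L\alpha_{\max}}\} > 0$. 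The main obstacle — and the point requiring the most care — is justifying the existence of the uniform Lipschitz constant $L$ for $\nabla g$ along all the trial segments $\U^k + t\nu_k\D^k$, $t \in [0,1]$: one must verify these segments remain inside a fixed compact subset of the open set $\NC$ on which $\C + \BC(\cdot)^{-1}$ is well-behaved, which is exactly what the step-adjustment factor $\nu_k$ (through the sign of $\theta$) is designed to guarantee, together with the boundedness from Corollary~\ref{CORO:bound-on-X} and Lemma~\ref{LEMM:bound}; I would spell out that $\C + \BC(\U^k + t\nu_k\D^k) = (1-t\nu_k)(\C+\BC(\U^k)) + t\nu_k\big(\C + \BC(\U^k) + \BC(\D^k)\big)$ and that the choice of $\nu_k$ keeps $\bm L_k^{-1}\big(\C + \BC(\U^k+ \nu_k\D^k)\big)(\bm L_k^\top)^{-1} \succeq (1-\tau)\I$, bounding the minimum eigenvalue of the whole segment away from $0$.
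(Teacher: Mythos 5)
Your proposal is correct and follows essentially the same route as the paper's proof: the same lower bound on $\nu_k$ via $\lambda_{\min}(\C+\BC(\U^k))\ge \mu/\beta_{\X}^{\max}$ and the uniform bound on $\|\BC(\D^k)\|$ from Lemma~\ref{LEMM:bound}, the same backtracking-plus-descent-lemma argument for $\sigma_k$, and the same projection inequality giving $\langle\nabla g(\U^k),\D^k\rangle/\|\D^k\|^2\ge 1/\alpha_{\max}$. The one point you flag as needing care --- a uniform Lipschitz constant for $\nabla g$ along the trial segments, guaranteed because the $\tau$-safeguard in $\nu_k$ keeps the eigenvalues of $\C+\BC(\cdot)$ bounded below by $(1-\tau)\mu/\beta_{\X}^{\max}$ --- is exactly what the paper imports from \cite[Lemma~3.5]{namchaisiri2024new}, so your sketch of that step matches the intended argument.
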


\begin{proof}
    Let $\BC(\D^k) := \AC^{\top}(\Delta \y) + \sum_{h=1}^{H} \Delta \S_h$. Using Lemma~\ref{LEMM:bound}, we have its bound 
    \begin{align}
        ||\BC(\D^k)|| & \leq \ ||\AC^{\top}||||\Delta \y|| + \sum_{h=1}^{H} ||\Delta \S_h||  \leq ||\AC^{\top}||\eta_{\Delta \y} + H\eta_{\Delta \S} \\
        & \leq \ \sqrt{H+1}\max\{||\AC^{\top}||,1\}||\D^k||.
        \label{eq:bound-BD}
    \end{align}

    We divide the proof into two sections. Firstly, we show that $\nu_k$ has a lower bound, and then we will show that $\sigma_k \nu_k$ has a lower bound.

    From the definition of $\nu_k$ in Step 2 of Algorithm~\ref{alg}, we consider only the case that $\theta < 0$. The definition of $\theta$ that is the minimum eigenvalue of $\L_k^{-1} \BC(\D^k) (\L_k^{\top})^{-1}$ implies that $\theta$ is the maximum value that satisfies
        $\L_k^{-1} \BC(\D^k) (\L_k^{\top})^{-1} \succeq \theta \I$.
    Using the property $\L_k \L_k^{\top} = \C + \BC (\U^k)$ and $\theta < 0$, 
		$\nu' = -1/\theta$ is the maximum value that satisfies
    \begin{equation}
        \C + \BC (\U^k) + \nu' \BC(\D^k) \succeq \O. 
    \end{equation}

    On the other hand, using the bound from Corollary~\ref{CORO:bound-on-X}, we have
    \begin{align}
        & \ \C + \BC (\U^k) + \nu' \BC(\D^k)  = \ \mu (\X^k)^{-1} + \nu'\BC(\D^k) \\
			\succeq  & \ \frac{\mu}{\beta_{\X}^{\max}} \I  - \nu'||\BC(\D^k)|| \I 
        \succeq \left(\frac{\mu}{\beta_{\X}^{\max}} - \nu'\left(||\AC^{\top}||\eta_{\Delta \y} + H\eta_{\Delta \S}\right)\right) \I.
    \end{align}
    Therefore, if we consider the interval $0 \leq \nu' \leq \mu / \left(\beta_{\X}^{\max}(||\AC^{\top}||\eta_{\Delta \y} + H\eta_{\Delta \S})\right) =: \nu_{\min}$, we can guarantee the positive semidefiniteness $\C + \BC (\U^k) + \nu' \BC(\D^k) \succeq 0$. 
		This implies 
		a positive lower bound $\nu_k \ge \min\{1,\tau \nu_{\min}\}$. 
    
From the discussion of {\upshape \cite[Lemma 3.5]{namchaisiri2024new}}, we have the bound
\begin{align}
  \left\|\left(\X(\U^k + \nu \bm D^k) - \X(\U^k) \right)\right\|  \leq \frac{\nu }{\mu\left(\frac{1-\tau}{\beta_{\X}^{\max}}\right)^2}\|\BC(\bm D^k)\|.
\end{align}
Therefore, it holds for $\lambda \ge 0$ that 
    \begin{align}
    & \ \|\nabla g(\U^k + \lambda \D^k) - \nabla g(\U^k)\|\\
    = & \ \left\|\left(-\AC(\X(\U^k + \lambda \D^k) - \X(\U^k)), \X(\U^k + \lambda \D^k) - \X(\U^k), \dots , \X(\U^k + \lambda \D^k) - \X(\U^k)\right)\right\|\\
    \leq & \ \sqrt{\|\AC\|^2 + H}\ \|\X(\U^k + \lambda \D^k) - \X(\U^k)\| \\
    \leq & \frac{\sqrt{\|\AC\|^2 + H}}{\mu\left(\frac{1-\tau}{\beta_{\X}^{\max}}\right)^2}\lambda \|\BC(\bm D^k)\| 
    \leq  \left(\frac{\sqrt{\|\AC\|^2 + H}}{\mu\left(\frac{1-\tau}{\beta_{\X}^{\max}}\right)^2} \sqrt{H+1}\max\{||\AC^{\top}||,1\}\right) \lambda ||\D^k|| = \lambda L ||\D^k||, 
    \label{eq:bound-L}
    \end{align}
    where the last inequality was due to \eqref{eq:bound-BD}
    and we introduce 
     $L := \frac{\sqrt{\left\|\AC\right\|^2 + H}}{\mu(\frac{1-\tau}{\beta_{\X}^{\max}})^2} \sqrt{H+1}\max\{||\AC^{\top}||,1\}$.

    We focus on the termination condition of the non-monotone Armijo rule in the line search of Step~2 in Algorithm~\ref{alg}. 
    If it terminates at $\sigma_k = 1$, then we obtain a lower bound as $\sigma_k \nu_k \ge  \min\{1,\tau \nu_{\min}\}$. 
    If it terminates at $\sigma_k = \beta^j$ for some $j \ge 1$, this indicates that the termination condition is not satisfied at $\sigma_k = \beta^{j-1}$. This further implies 
    \begin{align}\label{j-1condition} 
    g(\U^k + \beta^{j-1} \nu_k \D^k) & <  \min\limits_{[k - M + 1]_+ \leq l \leq k} g(\U^l) + \gamma\beta^{j-1}\nu_k\langle\nabla g(\U^k),\,\D^k\rangle \\
    & \leq g(\U^k) + \gamma\beta^{j-1}\nu_k\langle\nabla g(\U^k),\,\D^k\rangle.
    \end{align}
		Therefore, we have
    \begin{align}\label{taylor}
        \gamma \beta^{j-1}\nu_k\langle\nabla g(\U^k),\,\D^k\rangle
        \geq & \ \  g(\U^k + \beta^{j-1}\nu_k\D^k) - g(\U^k) \\
        = & \ \  \beta^{j-1}\nu_k \langle \nabla g(\U^k) , \D^k \rangle
        + \int_0^{\beta^{j-1}\nu_k} \langle \nabla g(\U^k + \lambda \D^k )- \nabla g(\U^k) , \D^k \rangle d \lambda.\\
        \geq & \ \ \beta^{j-1}\nu_k \langle \nabla g(\U^k) , \D^k \rangle - \frac{L(\beta^{j-1}\nu_k)^2}{2} \|\D^k\|^2,
    \end{align}
    where the equality is due to Taylor's expansion and the last inequality holds from \eqref{eq:bound-L}. If $||\D^k|| = 0$,  Algorithm \ref{alg} should be terminated in Step 1, so we can here assume $||\D^k|| > 0$.
    From $\beta > 0$ and $\nu_k \ge \min\{1,\tau \nu_{\min}\}$, 
    we know that 
    $\beta^{j-1} \nu_k > 0$.
   Therefore, \eqref{taylor} is equivalent to
    \begin{align}\label{boundj-1k}
        \beta^{j-1}\nu_k \geq \frac{2(1 - \gamma)}{L}\frac{\langle \nabla g(\U^k) , \D^k \rangle}{||\D^k||^2}. 
    \end{align}
    Using a property of the projection (see \cite[Proposition 2.1, P1]{HAGER08}), it holds that 
    \begin{equation*}
    \begin{split}
     \langle  (\U^k + \alpha_k\nabla g(\U^k)) - P_{\MC}(\U^k + \alpha_k\nabla g(\U^k)),\, \U^k - P_{\MC}(\U^k + \alpha_k\nabla g(\U^k))\rangle \le 0.
    \end{split}
	\end{equation*}
	The left-hand side is 
	\begin{equation*}
    \begin{split}	 
    \langle- \D^k + \alpha_k \nabla g(\U^k) ,\, - \D^k\rangle = - \alpha_k\langle\nabla g(\U^k),\,\D^k\rangle + \|\D^k\|^2,
    \end{split}
    \end{equation*}
    thus we obtain  
    \begin{equation}
     \frac{\langle\nabla g(\U^k),\,\D^k\rangle}{\|\D^k\|^2} \geq \frac{1}{\alpha_k} \geq \frac{1}{\alpha_{\max}}.\label{eq:bound-D}
    \end{equation}
    Applying \eqref{eq:bound-D} to \eqref{boundj-1k}, we obtain the positive lower bound $\beta^{j-1}\nu_k \geq \frac{2(1 - \gamma)}{\alpha_{\max}L}$, which leads to the positive lower bound $\sigma_k\nu_k := \beta^{j}\nu_k \geq \frac{2\beta(1 - \gamma)}{\alpha_{\max}L}$.
    Combining the case $\sigma_k = 1$, we obtain 
    $\sigma_k \nu_k \ge \min\{1, \tau \nu_{\min}, \frac{2\beta(1 - \gamma)}{\alpha_{\max}L}\}$ for all $k$. 
    This completes the proof.    
\end{proof}

Lemma~\ref{LEMM:liminf-delta} indicates the limit of the search direction of Algorithm \ref{alg}, which will be used in the proof of Lemma~\ref{LEMM:mainobjectiveconv}.

\begin{LEMM}\label{LEMM:liminf-delta}
Algorithm~\ref{alg} with the stopping criterion parameter $\varepsilon = 0$ terminates after reaching the optimal value $g^*$, or it generates the sequence $\{\U^k\}$ that satisfies
\[
\liminf_{k\to \infty}\|\Delta \U^k_{(1)}\| = 0.
\]
\end{LEMM}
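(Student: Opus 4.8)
The plan is to argue by contradiction in the non-terminating case: suppose Algorithm~\ref{alg} does not terminate, so $\Delta\U^k_{(1)} \neq 0$ for all $k$, and suppose moreover that $\liminf_{k\to\infty}\|\Delta\U^k_{(1)}\| > 0$. The standard machinery for non-monotone line searches (the Grippo--Lampariello--Lucidi / Birgin--Mart\'inez--Raydan argument, as used in \cite{nakagaki2020dual,namchaisiri2024new}) then forces a contradiction with the boundedness of $\{g(\U^k)\}$, which we already have since $\{\U^k\}\subseteq\mathcal{L}$ (Lemma~\ref{LEMM:bounded}), $\mathcal{L}$ is bounded (Lemma~\ref{LEMM:levelset-bounded}), and $g$ is continuous on $\mathcal{L}$, so $g^* \ge g(\U^k) \ge g(\U^0)$ and $\{g(\U^k)\}$ is a bounded sequence.

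The key steps, in order, are as follows. First I would relate the one-step increase $g(\U^{k+1}) - g(\U^k)$ to $\|\D^k\|^2$. From the Armijo-type acceptance rule in Step~2 together with the lower bound $\sigma_k\nu_k \ge (\sigma\nu)_{\min}$ from Lemma~\ref{LEMM:steplength-bound} and the projection inequality $\langle\nabla g(\U^k),\D^k\rangle/\|\D^k\|^2 \ge 1/\alpha_{\max}$ from \eqref{eq:bound-D}, one gets
\begin{equation*}
g(\U^{k+1}) \ge \min_{[k-M+1]_+ \le l \le k} g(\U^l) + \frac{\gamma(\sigma\nu)_{\min}}{\alpha_{\max}}\|\D^k\|^2.
\end{equation*}
Second, I would invoke the classical consequence of this non-monotone inequality (see the cited references): the subsequence $g(\U^{\ell(k)})$ of running maxima over windows of length $M$ is non-decreasing and bounded, hence convergent, and a telescoping/induction argument along the window indices yields $\lim_{k\to\infty}\sigma_k\nu_k\langle\nabla g(\U^k),\D^k\rangle = 0$, and therefore (using again \eqref{eq:bound-D} and $\sigma_k\nu_k \ge (\sigma\nu)_{\min}$) $\lim_{k\to\infty}\|\D^k\| = 0$. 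Third, I would pass from $\|\D^k\|\to 0$ back to $\|\Delta\U^k_{(1)}\|$: since $\D^k = P_{\MC}(\U^k + \alpha_k\nabla g(\U^k)) - \U^k$ with $\alpha_k \in [\alpha_{\min},\alpha_{\max}]$ and $\Delta\U^k_{(1)} = P_{\MC}(\U^k + \nabla g(\U^k)) - \U^k$, the standard monotonicity of the residual of the projected-gradient map in the step length (the map $\alpha \mapsto \|P_{\MC}(\U + \alpha\nabla g(\U)) - \U\|/\alpha$ is non-increasing, while $\alpha \mapsto \|P_{\MC}(\U + \alpha\nabla g(\U)) - \U\|$ is non-decreasing) gives $\|\Delta\U^k_{(1)}\| \le \max\{1,\alpha_{\max}\}\|\D^k\|$ (or the analogous bound with $\alpha_{\min}$), so $\|\Delta\U^k_{(1)}\| \to 0$, contradicting $\liminf_k\|\Delta\U^k_{(1)}\| > 0$.

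The main obstacle I anticipate is the passage in the third step from the fixed-step residual $\|\Delta\U^k_{(1)}\|$ (step length $1$) to the Barzilai--Borwein-step residual $\|\D^k\|$ (step length $\alpha_k$): one must be careful that the relevant projection-residual monotonicity is applied in the right direction and that the constant does not depend on $k$, which is where $\alpha_k\in[\alpha_{\min},\alpha_{\max}]$ is essential. A secondary subtlety is the handling of the $\min$ over the window $[k-M+1]_+ \le l \le k$ in the non-monotone rule: rather than reproving the GLL-type lemma, I would cite the corresponding result in \cite{nakagaki2020dual} or \cite{namchaisiri2024new} and note that the only ingredients it needs — boundedness of $\{g(\U^k)\}$, the uniform step-length lower bound, and the projection inequality \eqref{eq:bound-D} — have all been established above in the present generalized setting with the composite variable $\U$ and multiple sets $\SC_1,\dots,\SC_H$. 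Finally, the terminating alternative is immediate: if the algorithm stops at iteration $k$ with $\varepsilon = 0$ then $\Delta\U^k_{(1)} = 0$, and Lemma~\ref{LEMM:optimality-condition} (with $\alpha = 1$) shows $\U^k$ is optimal, i.e.\ $g(\U^k) = g^*$.
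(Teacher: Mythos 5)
Your proposal is correct and follows essentially the same route as the paper: the non-monotone (GLL-type) line-search argument combined with the boundedness of the level set and the uniform lower bound on $\sigma_k\nu_k$ gives $\liminf_{k\to\infty}\|\D^k\|=0$, and the projection-residual monotonicity properties (P4/P5 of \cite[Proposition~2.1]{HAGER08}) transfer this to $\|\Delta\U^k_{(1)}\|\le \max\{1,\alpha_{\max}\}\|\D^k\|$, exactly as in the paper's (much terser, citation-based) proof. Your parenthetical caution about whether the constant should involve $\alpha_{\min}$ rather than $\alpha_{\max}$ is well placed, but either uniform constant suffices for the conclusion since $\alpha_k\in[\alpha_{\min},\alpha_{\max}]$.
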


\begin{proof}
  As discussed in {\cite[Lemma 3.6]{namchaisiri2024new}}, the dual objective value increases at least in every $M$ iteration, where $M$ is used in the line search of Step 2 and it has an upper bound since the level set $\mathcal{L}$ is bounded from Lemma~\ref{LEMM:levelset-bounded}. Using the existence of the lower bound of $\sigma_k \nu_k$ from 
  Lemma~\ref{LEMM:steplength-bound}, 
  we can show $\liminf_{k\to \infty}\|\D^k\| = 0$. 
  Furthermore, from the properties P4 and P5 in {\cite[Proposition 2.1]{HAGER08}}, we can show that the inequality $\|\Delta \U^k_{(1)}\| \leq \max \{1,\alpha_{\max}\} \|\D^k\|$ holds. This implies the statement of this lemma. 
\end{proof}

We now discuss the convergence of the objective value to the optimal value of \eqref{dual}, denoted with $g^*$.
\begin{LEMM}\label{LEMM:mainobjectiveconv}
Algorithm~\ref{alg} with the stopping criterion parameter $\varepsilon = 0$ terminates after reaching the optimal value $g^*$, or it generates the sequence $\{\U^k\}$ that satisfies
\[
\liminf_{k\to \infty}|g(\U^k) - g^*| = 0.
\]
\end{LEMM}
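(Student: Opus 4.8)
The plan is to bound the duality gap at the $k$th iterate by the quantity $\|\Delta\U^k_{(1)}\|$ that Lemma~\ref{LEMM:liminf-delta} forces to zero along a subsequence, and then to pass to a limit. First I would dispose of the terminating case: if Algorithm~\ref{alg} with $\varepsilon=0$ stops at some $\U^k$, then $\Delta\U^k_{(1)}=\0$, i.e.\ $P_{\MC}(\U^k+\nabla g(\U^k))=\U^k$, so Lemma~\ref{LEMM:optimality-condition} with $\alpha=1$ gives that $\U^k$ is optimal and $g(\U^k)=g^*$. Otherwise, Lemma~\ref{LEMM:liminf-delta} provides a subsequence, still written $\{\U^k\}$, along which $\|\Delta\U^k_{(1)}\|\to0$, hence $\|[\Delta\y^k]_{(1)}\|\to0$ and $\|[\Delta\S^k_h]_{(1)}\|\to0$ for each $h$; since every $\U^k$ is dual feasible we also have $g(\U^k)\le g^*$ for all $k$.

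Next I would compute the gap between $g(\U^k)$ and the primal objective evaluated at $\X^k=\mu(\C+\BC(\U^k))^{-1}$. Substituting $\C+\BC(\U^k)=\mu(\X^k)^{-1}$ into $g$ gives $g(\U^k)=\b^\top\y^k+n\mu-\mu\log\det\X^k$, and using $\C=\mu(\X^k)^{-1}+\AC^\top(\y^k)-\sum_{h=1}^{H}\S^k_h$ to expand $\C\bullet\X^k$ yields the identity
\[
f(\X^k)-g(\U^k)=(\y^k)^\top(\AC(\X^k)-\b)+\sum_{h=1}^{H}\left(\lambda_h\|\QC_h(\X^k)\|_{p_h}-\S^k_h\bullet\X^k\right).
\]
Because $\MC$ does not constrain the $\y$-block, $[\Delta\y^k]_{(1)}=\nabla_\y g(\U^k)=\b-\AC(\X^k)$, so the first term is at most $\|\y^k\|\,\|[\Delta\y^k]_{(1)}\|$; the sequence $\{\y^k\}$ is bounded by Lemma~\ref{LEMM:bounded}, and by Lemma~\ref{LEMM:Subbound} each summand of the second term is at most $c_h\|[\Delta\S^k_h]_{(1)}\|$. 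Combined with $\|[\Delta\y^k]_{(1)}\|,\|[\Delta\S^k_h]_{(1)}\|\le\|\Delta\U^k_{(1)}\|$, this produces a constant $C$ with $|f(\X^k)-g(\U^k)|\le C\,\|\Delta\U^k_{(1)}\|$, so $f(\X^k)-g(\U^k)\to0$ along the subsequence.

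To finish I would pass to a further convergent subsequence $\U^k\to\bar{\U}$, which exists by the boundedness in Lemma~\ref{LEMM:bounded}. The uniform bounds $\beta_{\X}^{\min}\I\preceq\X^k\preceq\beta_{\X}^{\max}\I$ of Corollary~\ref{CORO:bound-on-X} pass to the limit and give $\C+\BC(\bar{\U})\succeq(\mu/\beta_{\X}^{\max})\I\succ\O$, so $\bar{\U}\in\NC$, and since $\MC$ is closed, $\bar{\U}\in\FC$. Then $\X^k\to\bar{\X}:=\X(\bar{\U})\succ\O$ by continuity, and $\|\AC(\X^k)-\b\|=\|[\Delta\y^k]_{(1)}\|\to0$ forces $\AC(\bar{\X})=\b$, so $\bar{\X}$ is feasible for \eqref{primal}; hence $f(\bar{\X})\ge f^*$, where $f^*$ is the optimal value of \eqref{primal}, and $f^*=g^*$ by strong duality (valid under Assumption~\ref{assp}). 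On the other hand, continuity of $f$ and $g$ together with the gap estimate give $f(\bar{\X})=\lim f(\X^k)=\lim g(\U^k)=g(\bar{\U})\le g^*$. Therefore $f(\bar{\X})=g(\bar{\U})=g^*$, so $g(\U^k)\to g^*$ along the subsequence; since $g(\U^k)\le g^*$ for every $k$, this yields $\liminf_{k\to\infty}|g(\U^k)-g^*|=0$.

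I expect the main obstacle to be that $\X^k$ is only asymptotically primal feasible — it is positive definite but satisfies $\AC(\X^k)=\b$ only up to the vanishing residual $[\Delta\y^k]_{(1)}$ — so $f(\X^k)\ge f^*$ cannot be invoked directly; the limiting step is what upgrades ``asymptotically feasible'' to the genuinely feasible point $\bar{\X}$. A closely related subtlety is that the limit $\bar{\U}$ must lie in the \emph{open} set $\NC$, so that $\X(\bar{\U})$, $\nabla g(\bar{\U})$, and $g(\bar{\U})$ are well defined and continuous there; this is precisely where the uniform lower bound $\X^k\succeq\beta_{\X}^{\min}\I$ of Corollary~\ref{CORO:bound-on-X} is indispensable, since otherwise the limit could sit on the boundary of the positive semidefinite cone.
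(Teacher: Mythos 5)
Your proof is correct, but it takes a genuinely different route from the paper's for the harder half of the estimate. Both arguments handle the duality gap at the iterate, $|f(\X^k)-g(\U^k)|$, in essentially the same way: the identity $f(\X^k)-g(\U^k)=(\y^k)^\top(\AC(\X^k)-\b)+\sum_h\bigl(\lambda_h\|\QC_h(\X^k)\|_{p_h}-\S^k_h\bullet\X^k\bigr)$, the boundedness of $\y^k$, and Lemma~\ref{LEMM:Subbound} (your observation that $[\Delta\y^k]_{(1)}=\b-\AC(\X^k)$ because $\MC$ leaves the $\y$-block free is a clean way to bound the first term; the paper instead writes $\b-\AC(\X^k)=\AC(\X^*-\X^k)$). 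The divergence is in what comes next. The paper uses the three-term split $|g(\U^k)-g^*|\le|g(\U^k)-f(\X^k)|+|f(\X^k)-f(\X^*)|+|f(\X^*)-g^*|$ and bounds the middle term by $\|\Delta\U^k_{(1)}\|$ by importing \cite[Lemma~3.15]{nakagaki2020dual} and \cite[Lemma~3.8]{namchaisiri2024new}, which control $\|\X^k-\X^*\|$ directly; this yields a \emph{per-iteration quantitative bound} $|g(\U^k)-g^*|\le\mathrm{const}\cdot\|\Delta\U^k_{(1)}\|$. You instead replace that machinery with a compactness argument: extract $\U^k\to\bar\U$, show the limit $\bar\X$ is genuinely primal feasible, and squeeze $g^*\le f(\bar\X)=g(\bar\U)\le g^*$. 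Your route is more self-contained (it needs neither the cited lemmas nor uniqueness of $\X^*$) and it correctly establishes the stated $\liminf$. The one thing it does \emph{not} deliver is the quantitative bound itself, and the paper's proof of Theorem~\ref{theo:dspg} explicitly reuses that bound (``$|g(\U^k)-g^*|$ is bounded by $\|\Delta\U^k_{(1)}\|$'' is what converts the contradiction hypothesis $g(\U^{k_i})\le g^*-\epsilon$ into a uniform lower bound on $\|\Delta\U^{k_i}_{(1)}\|$, hence on $\|\D^{k_i}\|$). So if your version of the lemma were adopted, the theorem's proof would need to be reworked, e.g.\ by restoring a quantitative estimate for $|f(\X^k)-f(\X^*)|$ via a feasibility correction of $\X^k$ controlled by $\|[\Delta\y^k]_{(1)}\|$ and the surjectivity of $\AC$ --- which is precisely what the lemmas you bypassed provide.
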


\begin{proof}
Let $\X^*$ be the optimal solution of \eqref{primal}. Due to the strict convexity of the logarithm function, $\X^*$ is the unique solution. We decompose the difference of $|g(\U^k) - g^*|$
into the summation of three parts by the following inequality:
\begin{equation}\label{divideparts}
|g(\U^k) - g^*|  \leq |g(\U^k) - f(\X^k)| + |f(\X^k) - f(\X^*)| + |f(\X^*) - g^*|.
\end{equation}
The first part of \eqref{divideparts} can be evaluated as 
\begin{align}
    |g(\U^k) - f(\X^k)| =& \ \Bigg| \b^T\y^k + \mu \log \det \left(\C - \AC^{\top}(\y^k) + \sum_{h=1}^H \S_h^k\right) +  n \mu - n \mu \log \mu \\
    & \  - \C \bullet \X^k + \mu \log \det \X^k - \sum_{h=1}^H \|\QC_h(\X^k)\|_{p_h}\Bigg| \\
    = & \ \left| (\y^k)^{\top} (\b - \AC(\X^k)) - \sum_{h = 1}^{H}(\lambda_h \|\QC(\X^k)\|_{p} - \S_h^k \bullet \X^k)\right| \\
    \leq & \ \|\y^k\| \|\AC\| \|\X^* - \X^k\| + \sum_{h=1}^{H} \left|\lambda_h\|\QC(\X^k)\|_{p} - \S_h^k \bullet \X^k \right|.
\end{align}

The first term is bounded by $\|\X^k - \X^*\|$ due to the boundedness of $\y^k$ from Lemma~\ref{LEMM:bound}, and the second summation is bounded by $\|[\Delta \S^k_h]_{(1)}\|$ as shown in Lemma~\ref{LEMM:Subbound}. From \cite[Lemma 3.15]{nakagaki2020dual}, $\|\X^k - \X^*\|$ can be bounded by $||\Delta \U^k_{(1)}||$. Therefore, $|g(\U^k) - f(\X^k)|$ is bounded by $||\Delta \U^k_{(1)}||$. We can use the same discussion as \cite[Lemma 3.8]{namchaisiri2024new} to show that the second term of \eqref{divideparts} is also bounded by $\|\Delta \U^k_{(1)}\|$. Due to Assumption~\ref{assp}, the duality theorem between \eqref{primal} and \eqref{dual} holds, and makes the third term in \eqref{divideparts} zero.
Combining three terms, we can show that $|g(\U^k) - g^*|$ is bounded by $||\Delta \U^k_{(1)}||$. Finally, from Lemma~\ref{LEMM:liminf-delta}, we have $\liminf_{k\to \infty} ||\Delta \U^k_{(1)}|| = 0$. This completes the proof.
\end{proof} 

Combining these results, we can show the main convergence of Algorithm~\ref{alg}.

\begin{THEO}\label{theo:dspg}
Algorithm~\ref{alg} with the stopping criterion parameter $\varepsilon = 0$ terminates after reaching the optimal value $g^*$, or it generates the sequence $\{\U^k\}$ that satisfies
\[
\lim_{k\to \infty}|g(\U^k) - g^*| = 0.
\]
\end{THEO}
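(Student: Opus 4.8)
The plan is to upgrade the $\liminf$ statement of Lemma~\ref{LEMM:mainobjectiveconv} to a genuine limit by the Grippo--Lampariello--Lucidi-style bookkeeping for the non-monotone line search, as in the analysis of \cite{nakagaki2020dual} and \cite{birgin2000nonmonotone}. The key point is that the features distinguishing the generalized problem~\eqref{primal}---the multiple sets $\SC_1,\dots,\SC_H$---have already been absorbed into Lemmas~\ref{LEMM:bounded}--\ref{LEMM:mainobjectiveconv}, so the remaining argument concerns only the iteration and does not depend on the $\QC_h$'s. If Algorithm~\ref{alg} stops in Step~1 with $\varepsilon = 0$, then $\|\Delta\U^k_{(1)}\| = 0$ and Lemma~\ref{LEMM:optimality-condition} (with $\alpha = 1$) makes $\U^k$ optimal, which is the first alternative; so I assume from now on that an infinite sequence $\{\U^k\}$ is generated, so that $\|\D^k\| > 0$ for every $k$.

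First I would record two preliminary facts. Every iterate is feasible for~\eqref{dual}, so $g(\U^k) \le g^*$ for all $k$, and combined with Lemma~\ref{LEMM:mainobjectiveconv} this already yields $\limsup_{k\to\infty} g(\U^k) = g^*$. Next, set $m_k := \min_{[k-M+1]_+ \le l \le k} g(\U^l)$ and let $l(k)$ attain this minimum. Since $\langle \nabla g(\U^k),\D^k\rangle \ge \|\D^k\|^2/\alpha_{\max} \ge 0$ by~\eqref{eq:bound-D}, the line-search condition in Step~2 gives $g(\U^{k+1}) \ge m_k$, and a short index comparison then shows that $\{m_k\}$ is non-decreasing. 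Being bounded above by $g^*$, it converges to some $m^* \le g^*$, and because $l(k) \to \infty$ one moreover gets $\liminf_{k\to\infty} g(\U^k) = m^*$.

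The core step is to prove $\lim_{k\to\infty} g(\U^k) = m^*$. Writing the non-monotone Armijo inequality at iteration $l(k)-1$ and using the step-length lower bound $\sigma_{l(k)-1}\nu_{l(k)-1} \ge (\sigma\nu)_{\min}$ from Lemma~\ref{LEMM:steplength-bound} together with~\eqref{eq:bound-D}, one obtains $c\,\|\D^{l(k)-1}\|^2 \le m_k - m_{l(k)-1}$ for a fixed constant $c > 0$; since $m_k$ and $m_{l(k)-1}$ both tend to $m^*$, this forces $\|\D^{l(k)-1}\| \to 0$ and hence $\|\U^{l(k)} - \U^{l(k)-1}\| \le \sigma_{l(k)-1}\nu_{l(k)-1}\|\D^{l(k)-1}\| \to 0$. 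All iterates lie in the bounded level set $\mathcal{L}$ (Lemma~\ref{LEMM:levelset-bounded}), which is compact and on which $g$ is continuous, hence uniformly continuous; combined with $g(\U^{l(k)}) = m_k \to m^*$ this gives $g(\U^{l(k)-1}) \to m^*$. Iterating the same reasoning yields, for each fixed $i \ge 1$, $\|\U^{l(k)-i} - \U^{l(k)-i-1}\| \to 0$ and $g(\U^{l(k)-i}) \to m^*$; and since every index $k$ can be written as $k = l(k+M) - i$ with $i \in \{1,\dots,M\}$ and $M$ is a fixed constant, it follows that $g(\U^k) \to m^*$. Finally, a convergent sequence has coinciding $\liminf$ and $\limsup$, so $m^* = \limsup_{k\to\infty} g(\U^k) = g^*$; equivalently $\lim_{k\to\infty} |g(\U^k) - g^*| = 0$, as claimed.

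I expect the non-monotone bookkeeping of the third paragraph to be the main obstacle: the individual estimates are routine given Lemma~\ref{LEMM:steplength-bound} and~\eqref{eq:bound-D}, but one must be careful with the windowed-minimum arithmetic (monotonicity of $m_k$, the identity $k = l(k+M) - i$, and any tie-breaking in the definition of $l(k)$) and with checking that the uniform continuity of $g$ on $\mathcal{L}$ is applied only to pairs of iterates, all of which lie in $\mathcal{L}$. Everything else reduces to feasibility, the optimality characterization of Lemma~\ref{LEMM:optimality-condition}, and the $\liminf$ already established in Lemma~\ref{LEMM:mainobjectiveconv}.
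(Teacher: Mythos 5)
Your proof is correct, but it takes a genuinely different route from the paper. You run the direct Grippo--Lampariello--Lucidi bookkeeping: show the windowed minimum $m_k$ is non-decreasing and convergent, propagate $\|\D^{l(k)-i-1}\|\to 0$ backwards through the window by induction on $i$, conclude that the \emph{whole} sequence $g(\U^k)$ converges to $m^*$, and then identify $m^*=g^*$ via the $\liminf$ of Lemma~\ref{LEMM:mainobjectiveconv}. The paper instead argues by contradiction: it isolates the set of ``bad'' indices $\{k_i\}$ with $g(\U^{k_i})\le g^*-\epsilon$, shows consecutive bad indices are at most $M$ apart, uses the chain $|g(\U^{k_i})-g^*|\le C\|\Delta\U^{k_i}_{(1)}\|\le C'\|\D^{k_i}\|$ to get a uniform lower bound $\|\D^{k_i}\|\ge\delta$, and then derives $g(\U^{k_i})\ge g(\U^{k_j})+\bar\delta$ along a chain of bad indices, forcing $g\to\infty$. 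The trade-off: your argument needs uniform continuity of $g$ on the level set $\mathcal{L}$ (which does hold --- $\mathcal{L}$ is bounded and convex since $g$ is concave, and $\nabla g$ is bounded on it by Corollary~\ref{CORO:bound-on-X}, so $g$ is Lipschitz there --- but this deserves a sentence of justification rather than a bare appeal to ``compactness,'' as $\mathcal{L}$ need not be closed), whereas the paper avoids continuity arguments entirely by reusing the quantitative bound $|g(\U^k)-g^*|\le C\|\Delta\U^k_{(1)}\|$ from Lemma~\ref{LEMM:mainobjectiveconv}. In exchange, your route delivers the slightly stronger conclusion that $\lim_k g(\U^k)$ exists as a by-product, and both routes draw on the same key ingredients (Lemma~\ref{LEMM:steplength-bound}, inequality \eqref{eq:bound-D}, and the non-monotone Armijo condition).
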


\begin{proof}
    
    We use the contradiction to prove this theorem. Suppose that there exists $\epsilon > 0$ and an infinite increasing sequence $\{k_1, k_2, \dots\}$ such that $g(\U^{k_i}) \le g^* - \epsilon$ for all $i$ and $g(\U^{l}) >  g^* - \epsilon$ for all $l \notin \{k_1, k_2, \dots\}$.

		Firstly, we will show that $k_{i+1} - k_{i} \leq M$ for all $i$. Suppose that there exists an $i$ such that $k_{i+1} - k_{i} > M$, which means all elements in $\{k_{i+1} - M , k_{i+1} - M + 1 , \dots  k_{i+1} - 1\}$ is not contained in the sequence of $k_i$. Therefore,  $g(\U^{l})  >  g^* - \epsilon$ for all $k_{i+1} - M \leq l \leq k_{i+1} - 1$, and this is equivalent to $g(\U^{l}) > g^* - \epsilon$.

    From \eqref{eq:bound-D}, we obtain  
    \begin{equation}
     \alpha_k\langle\nabla g(\U^k),\,\D^k\rangle \geq \|\D^k\|^2 \geq 0.   \label{eq:bound-D-1}
    \end{equation}
    
    Applying this to the inequality in Step 2 of Algorithm \ref{alg}, we obtain that
    \begin{align}
        g(\U^{k_{i+1}}) \geq \min\limits_{k_{i+1} - M \leq l \leq k_{i+1} - 1} g(\U^l) \geq g^* - \epsilon.
    \end{align}
    However, this contradicts to $g(\U^{k_i}) \le g^* - \epsilon$ for all $i$.
		Therefore, we obtain $k_{i+1} - k_{i} \leq M$ for all $i$.
    
    From the proof in Lemma~\ref{LEMM:mainobjectiveconv}, 
    we know that $|g(\U^k) - g^*|$ is bounded by $||\Delta \U^k_{(1)}||$. This means the sequence $||\Delta \U^{k_i}_{(1)}||$ has a lower bound $\bar{\epsilon}$. In addition, the proof of Lemma~\ref{LEMM:liminf-delta} employed $\|\Delta \U^k_{(1)}\| \leq \max \{1,\alpha_{\max}\} \|\D^{k}\|$. This leads to the existence of the lower bound of $\|\D^{k_i}\| = \bar{\epsilon} 
 / \max \{1,\alpha_{\max}\} =: \delta$. From \eqref{eq:bound-D}, 
we know $\left\langle\nabla g(\U^k),\,\D^k\right\rangle 
 \geq \frac{\left\|\D^k\right\|^2}{\alpha_k} \geq \frac{\delta^2}{\alpha_{\max}}$.
 Therefore, we derive
    \begin{align}
        g(\U^{k_{i}}) \geq \min\limits_{k_{i} - M \leq l \leq k_{i} - 1} g(\U^l) + \bar{\delta},
    \end{align}
    where $\bar{\delta}:= \gamma(\sigma \nu)_{\min} \frac{\delta^2}{\alpha_{\max}}$. Let $l(k)$ be an integer such that $k - M \leq l(k) \leq k - 1$ and $g(\U^{l(k)}) = \min\limits_{k_ - M \leq l \leq k - 1} g(\U^l)$. Since $k_{i} - k_{i-1} \leq M$, we have $g(\U^{l(k_i)}) \leq g(\U^{k_{i-1}}) \leq g^* - \epsilon$. This means $l(k_i)$ is in the sequence $\{k_1, k_2, \dots\}$. Therefore, for each $k_i > M$, there exists $k_j$ such that
	$	k_i - k_j \le M$ and 
    \begin{align}
        g(\U^{k_{i}}) \geq g(\U^{k_{j}}) + \bar{\delta}.
    \end{align}
		Hence, 
		if $\{k_1, k_2, \dots\}$ is an infinite sequence,
		we know  $g(\U^{k_{i}}) \to \infty$ when we take $i \to \infty$, 
		and this contradicts the existence of the optimal solution $g^*$. This completes the proof.
\end{proof}

\section{Numerical experiments}\label{sec:numex}

In this section, we present numerical results of Algorithm \ref{alg} on a problem of the form of \eqref{primal} with synthetic data. In the second experiment that performs on a problem with block regularization, we compare Algorithm \ref{alg} with the projected gradient (PG) method proposed in Duchi et al.~\cite[Algorithm~3]{duchi2008projected}. All experiments in this section were conducted in Matlab R2022b on a 64-bit PC with Intel Core i7-7700K CPU (4.20 GHz, 4 cores) and 16 GB RAM.

For Algorithm \ref{alg}, we set the parameters $\tau = 0.5 , \gamma = 10^{-3}, \beta = 0.5, \alpha_{\min} = 10^{-8}, \alpha_{\max} = 10^8$ and $M = 5$. For PG, we set the parameters $\alpha = 0.5$, $\beta = 0.5$. We take an initial point of Algorithm \ref{alg} and PG as $\U^0 = (\y^0 , \S^0_1 , \dots , \S^k_H) = (\0, \O, \dots, \O)$ and $W = \O$, respectively. We set the stopping criterion of Algorithm~\ref{alg} as
$\|\Delta \U^k_{(1)}\| \leq \varepsilon$ with $\varepsilon = 10^{-12}$.
The limit of the iterations is 5000 iterations, and the computation time limit is 7200 seconds.

For the projection onto the $\BC_{p_h^*}^{\lambda_h}$,
we use the direct computation when $p_h^* \in \{1, 2, \infty\}$:
\begin{align}
	[P_{\BC_{p_h^*}^{\lambda}}(\z)]_i 
	= \left\{
	\begin{array}{lcl}
 z_i - (\text{sign}(z_i)\min\{|z_i|,s\}) & \text{for} & p_h^* = 1 \\
 \frac{\lambda z_i}{\max\{||\z||_2,\lambda\}} & \text{for} & 
 p_h^* = 2 \\
\max\{ -\lambda , \min \{ \lambda , z_i\}\} & \text{for} & p_h^* = \infty.
	\end{array}
	\right.
\end{align}
where $s$ for the case $p_h^* = 1$ is a real number that satisfies $\sum_{i = 1}^{N} \max\{0,|x_i|-s\} = \lambda$. 
For $p_h^* \notin \{1, 2, \infty\}$,
we use the Newton method implemented in \texttt{bpdq\_proj\_lpball}\footnote{\url{https://wiki.epfl.ch/bpdq/documents/help/bpdq_toolbox/common/bpdq_proj_lpball.html}}.


To evaluate the performance of the algorithm, we use 
the relative gap defined in \cite{namchaisiri2024new} as
\[
\text{Gap} = \frac{|P - D|}{\max\{ 1 , (|P| + |D|) / 2\}},
\]
where $P$ and $D$ are the output values of primal and dual objective functions, respectively.

\subsection{Log-likelihood minimization problem with \texorpdfstring{$\ell_{p}$}{lp}-norm extension}\label{subsec:fbnormloglikelihood}

In this experiment, we evaluate the efficiency of the proposed method by solving the following synthetic problem:
\begin{align}\label{primalexp1-1}
 \min_{\X\in\bS^n} & \ \ \C \bullet \X - \mu \log \det \X + 
 \sum_{h=1}^{H}\lambda_h \left(\sum_{1 \leq i < j \leq n} |X_{ij}|^{p_h}\right)^{\frac{1}{p_h}}\\
\mbox{s.t.}  & \ \  X_{ij} = 0 \ \forall (i,j) \in \Omega,  \X \succ \O.
\end{align}
Here, $\Omega \subset \{(i,j): 1 \le i \le j \le n\}$ is a set that defines the linear constraints.
We introduce a linear map $vect: \SMAT^{n} \to \Real^{\frac{n(n-1)}{2}}$ that reshapes a $n \times n$ symmetric matrix into a $\frac{n(n-1)}{2}$-dimensional vector by stacking the column vectors in the upper triangular part of the matrix.
We can rewrite the problem into the form of \eqref{primal} as follows:
\begin{equation}\label{primalexp1-2}
\begin{split}
 \min_{\X\in\bS^n} & \ \ f(\X) := \C \bullet \X - \mu \log \det \X + \sum_{h=1}^{H}\lambda_h ||vect(\X)||_{p_h}\\
\mbox{s.t.}  & \ \  X_{ij} = 0 \ \forall (i,j) \in \Omega,  \X \succ \O.
\end{split}
\end{equation}

To generate the input matrix $\C$ and the set $\Omega$, 
we used the same procedure in \cite{nakagaki2020dual}. Firstly, we randomly generated a $n \times n$ sparse positive matrix $\Sigma^{-1}$ with a density parameter $\sigma = 0.1$, and constructed the covariance matrix $\C \in \SMAT^n$ from $\max\{2n,2000\}$ samples of the multivariate Gaussian distribution $\NC(0,\Sigma)$.
We made a set $\Omega^{'} := \{(i,j) \ | \  \Sigma^{-1}_{ij} = 0,  |i - j| > 5, 1 \le i < j \le n \}$, and randomly selected a half of entries in $\Omega^{'}$ to be $\Omega$. 


Firstly, we conducted experiments on $H = 1$, which corresponds to the following problem:
\begin{align}\label{primalexp1-h1}
 \min_{\X\in\bS^n} & \ \ \C \bullet \X - \mu \log \det \X + 
 \lambda_1||vect(\X)||_{p_1}\\
\mbox{s.t.}  & \ \  X_{ij} = 0 \ \forall (i,j) \in \Omega,  \X \succ \O.
\end{align}
Here, we set $\lambda_1 = 0.001 \cdot n^{1-\frac{1}{p_1}}$.

 Table~\ref{table:exp1-1} shows the numerical results of problem \eqref{primalexp1-h1}. The first column is the size $n$ and the number of constraints $|\Omega|$. The second, third, and fourth columns are the number of iterations, the computation time in seconds, and the relative gap for Algorithm~\ref{alg}. 
The other six columns are for the different values of $p_1$.
\begin{table}[H] \centering \ra{1.3}
	\caption{Numerical results on $\ell_{p_1}$-norm log-likelihood minimization problem ($H = 1$).}
		\label{table:exp1-1} 
		\addtolength{\tabcolsep}{-2pt}
	\scalebox{0.8}{
		\begin{tabular}{@{}rrrrrrrrrrrr@{}}\toprule &\multicolumn{3}{c}{$p_1 = 1$} & \phantom{abc} &\multicolumn{3}{c}{$p_1 = 2$} & \phantom{abc} &\multicolumn{3}{c}{$p_1 = \infty$}\\ 
				\cmidrule{2-4} \cmidrule{6-8} \cmidrule{10-12}  \multicolumn{1}{c}{$(n , |\Omega|)$} & Iterations & Time & Gap && Iterations & Time & Gap && Iterations & Time & Gap\\
		$(500,56086)$ & 207 & 15.03 & 3.70e-7 && 241 & 17.58 & 7.23e-7 && 193 & 15.62 & 1.66e-7\\ 
		$(1000,220647)$ & 169 &  57.75 & 2.36e-7 && 187 & 63.95 & 2.30e-7 && 153 & 61.40 & 1.46e-7\\
		$(2000,859795)$ & 127 & 292.69 & 7.05e-7  && 202 & 468.78 & 5.75e-7 && 155 & 442.88 & 2.55e-7\\
		$(4000,3311218)$ & 123 &  2104.99 & 5.66e-7 && 217 &  3662.50 & 6.02e-6 && 196 & 3300.02 & 3.09e-7\\
		\bottomrule 
		\end{tabular}}
		\end{table}

 We can see that Algorithm~\ref{alg} can solve the problem \eqref{primalexp1-h1} in different values of $p_1$. The original DSPG method in Nakagaki et al.~\cite{nakagaki2020dual} can solve only the case of $p_1 = 1$, while Table~\ref{table:exp1-1} indicates Algorithm~\ref{alg} can solve other $p_1 > 1$ with enough accuracy. 
 
 Table~\ref{table:exp1-1-1} reports a more detailed computation time of the projection of problem \eqref{primalexp1-h1}. The second column is the computation time of projection of all the iterations, the third column is the average time of projection per iteration,
 and the fourth column is the average of the entire computation per iteration. 
The other six columns are for $p_1=2$ and $p_1 = \infty$. We can observe from Table~\ref{table:exp1-1-1} that even the projection time for $p_1 = \infty$ is higher than others due to the complexity of the projection onto the $\ell_{p_1^*} = \ell_{1}$-ball, the average times per iteration of the three experiments are not much different. This is because the computation cost of each projection is much lower than the cost of Cholesky factorization in Step~2 which demands $O(n^3)$ operations.

 \begin{table}[H] \centering \ra{1.3}
\caption{Projection time on $\ell_{p_1}$-norm log-likelihood minimization problem ($H = 1$).}
	\label{table:exp1-1-1} 
	\addtolength{\tabcolsep}{-2pt}
\scalebox{0.7}{
	\begin{tabular}{@{}rrrrrrrrrrrr@{}}\toprule &\multicolumn{3}{c}{$p_1 = 1$} & \phantom{abc} &\multicolumn{3}{c}{$p_1 = 2$} & \phantom{abc} &\multicolumn{3}{c}{$p_1 = \infty$}\\ 
			\cmidrule{2-4} \cmidrule{6-8} \cmidrule{10-12}  \multicolumn{1}{c}{$(n , |\Omega|)$} & Time.Proj & Avg.Proj. & Avg.Comp. && Time.Proj & Avg.Proj. & Avg.Comp. && Time.Proj & Avg.Proj. & Avg.Comp.\\
	$(500,56086)$ & 0.27 & 1.31e-3 & 7.26e-2 && 0.34 & 1.41e-3 & 0.07 && 2.79 & 1.44e-2 & 8.09e-2\\ 
	$(1000,220647)$ & 0.87 &  5.19e-3 & 0.34 && 0.97 & 5.20e-3 & 0.34 && 6.94 & 4.54e-2 & 0.40\\
	$(2000,859795)$ & 2.31 & 1.81e-2 & 2.30  && 4.31 & 2.13e-2 & 2.32 && 31.1 & 0.20 & 2.57\\
	$(4000,3311218)$ & 9.45 &  7.69e-2 & 17.11 && 18.48 &  8.52e-2 & 18.88 && 165.22 & 0.84 & 16.84\\
	\bottomrule 
	\end{tabular}}
	\end{table}

Next, we did the experiments with $H = 2$:
\begin{align}\label{primalexp1-h2}
 \min_{\X\in\bS^n} & \ \ \C \bullet \X - \mu \log \det \X + \lambda_{p_1}||vect(\X)||_{1} + \lambda_{2}||vect(\X)||_{p_2}\\
\mbox{s.t.}  & \ \  X_{ij} = 0 \ \forall (i,j) \in \Omega,  \X \succ \O.
\end{align}
Similar to the previous problem, we set $\lambda_1 = 0.001 \cdot n^{1-\frac{1}{p_1}}$ and $\lambda_2 = 0.001 \cdot n^{1-\frac{1}{p_2}}$. This problem has different norms in the objective function so we can evaluate that Algorithm~\ref{alg} can handle the problem by summating the extension structure. We mention that the existing DSPG methods cannot apply to this problem.

Table \ref{table:exp1-2} reports the numerical results on problem \eqref{primalexp1-h2}. 
Similarly to the previous experiment, the computation time is almost proportional to the number of iterations, thus the average time for each iteration does not vary so much. For example, the numerical experiments at $n = 4000$ takes $20.03$ seconds per iteration for the case $(p_1,p_2) = (1,2)$,   $18.06$ seconds per iteration for the case $(p_1,p_2) = (1,\infty)$, and $17.54$ seconds per iteration for the case $(p_1,p_2) = (2,\infty)$.

\begin{table}[H] \centering \ra{1.3}
\caption{Numerical results on $\ell_p$-norm log-likelihood minimization problem ($H = 2$).}
	\label{table:exp1-2} 
	\addtolength{\tabcolsep}{-2pt}
\scalebox{0.8}{
	\begin{tabular}{@{}rrrrrrrrrrrr@{}}\toprule &\multicolumn{3}{c}{$(p_1,p_2) = (1,2)$} & \phantom{abc} &\multicolumn{3}{c}{$(p_1,p_2) = (1,\infty)$} & \phantom{abc} &\multicolumn{3}{c}{$(p_1,p_2) = (2,\infty)$}\\ 
			\cmidrule{2-4} \cmidrule{6-8} \cmidrule{10-12}  \multicolumn{1}{c}{$(n , |\Omega|)$} & Iterations & Time & Gap && Iterations & Time & Gap && Iterations & Time & Gap\\
	$(500,56086)$ & 283 & 30.86 & 6.67e-7 && 202 & 19.36 & 9.25e-8 && 210 & 18.70 & 2.36e-7\\ 
	$(1000,220647)$ & 191 &  97.29 & 5.50e-7 && 164 & 78.58 & 2.69e-7 && 150 & 60.26 & 2.13e-8\\
	$(2000,859795)$ & 120 & 351.24 & 6.35e-7  && 124 & 348.99 & 2.45e-7 && 166 & 412.15 & 5.45e-7\\
	$(4000,3311218)$ & 105 &  2103.68 & 1.04e-6 && 76 & 1372.87 & 1.19e-6 && 212 & 3718.64 & 2.63e-6\\
	\bottomrule 
	\end{tabular}}
	\end{table}

We further conducted an experiment with the value $p_h$ that requires high costs for the projection to $\ell_{p_h^*}$-ball. We examined cases with $H = 1$, $p_1 = 3/2$ and $4/3$, which need the projection on $\ell_3$-ball and $\ell_4$-ball respectively. 

Table \ref{table:exp1-4} 
shows the numerical results of problem \eqref{primalexp1-h1} with $p_1 = 3/2$ and $4/3$ respectively. Now we observe that the computation time for the projection is very high compared to the previous experiments with $p_1 = 1, 2, \infty$. In all pairs of $(n,|\Omega|)$, the computation time of the projection occupies more than half of the entire computation time. This makes Algorithm \ref{alg} cannot solve the experiments with $(n,|\Omega|) = (4000,3311218)$ in the computation time limit of 7200 seconds. 

\begin{table} \centering \ra{1.3}
	\caption{Numerical results on $\ell_{3/2}$-norm and $\ell_{4/3}$-norm log-likelihood minimization problems.}
		\label{table:exp1-4} 
		\addtolength{\tabcolsep}{-2pt}
  \scalebox{0.9}{
		\begin{tabular}{@{}rrrrrrrrr@{}}\toprule 
  {$(n , |\Omega|)$} & Iterations & Time & Time.Proj & Avg.Comp & Avg.Proj & Gap \\
	\hline
	\multicolumn{7}{c}{$p=\frac{3}{2}$} \\
	\hline
		$(500,56086)$ & 337 & 95.42 & 72.92 & 0.28 & 0.22 & 8.32e-7\\ 
		$(1000,220647)$ & 229 & 292.18 & 207.29 & 1.28 & 0.91 & 8.98e-7 \\
		$(2000,859795)$ & 166 & 953.22 & 563.64 & 5.74 & 3.40 & 1.53e-6 \\
		$(4000,3311218)$ &  &  & OOT &  &   &  \\
		\hline
		\multicolumn{7}{c}{$p=\frac{4}{3}$} \\
		\hline
			$(500,56086)$ & 344 & 147.16 & 124.02 & 0.42 & 0.36 & 2.19e-6\\ 
		$(1000,220647)$ & 343 & 606.59 & 493.98 & 1.44 & 1.77 & 1.84e-6\\
		$(2000,859795)$ & 387 & 3103.23 & 2218.91 & 8.01 & 5.73 & 2.66e-6 \\
		$(4000,3311218)$ &  &   & OOT  &  &   &  \\
		\bottomrule 
		\end{tabular}}
		\end{table}

\subsection{Block \texorpdfstring{$\ell_{\infty}$}{linfty}-regularized log-likelihood minimization problem}\label{subsec:blocklinftyloglikelihood}

In this experiment, 
we present the numerical results on  
the following block $l_{\infty}$-regularized log-likelihood minimization problem \eqref{duchi} from Duchi et al.~\cite{duchi2008projected}: 
\begin{align}
	 \min_{\X \in \SMAT^n} & \ \ \C \bullet \X - \log \det \X + 
   \sum_{h_1,h_2 = 1}^{H}
   \lambda_{h_1 h_2} \max \{ |X_{ij}| \big| (i,j) \in G_{h_1 h_2} \}.\label{primalexp2-1}
\end{align}

We constructed the covariance matrix $\C \in \SMAT^n$ as 
in Section~\ref{subsec:fbnormloglikelihood}.
The sets $G_{11}, \dots, G_{H_1H_2}$ are the disjoint subsets of 
$\{1,\dots,n\} \times \{1,\dots,n\}$
that partition the inverse covariance matrix $\BSigma$.
More precisely, to inherit the symmetry of $\BSigma \in \SMAT^n$, 
we first divide the set of row/column indexes 
$\{1, \dots, n\}$ into $\bar{G}_1, \bar{G}_2, \dots,\bar{G}_H$
and define $G_{h_1 h_2} = 
(\bar{G}_{h_1} \times \bar{G}_{h_2}) \cup (\bar{G}_{h_2} \times \bar{G}_{h_1})$
for $1 \leq h_1 \leq h_2 \leq H$.
  
We can rewrite \eqref{primalexp2-1} into the form of \eqref{primal} by defining a linear map $\QC_{h_1 h_2}(\X)$ that stacks
$\{X_{ij} \big| (i,j) \in G_{h_1 h_2} \}$ as a vector
and taking its infinity norm (that is, $p_{h_1 h_2} = \infty$).
We set the penalty $\lambda_{h_1 h_2} := \rho |G_{h_1 h_2}|$
where $\rho$ is a positive parameter 
and $|G_{h_1 h_2}|$ is the cardinality of $G_{h_1 h_2}$
so that $\lambda_{h_1 h_2}$ is proportional to $|G_{h_1 h_2}|$.
In the following result, we used $\rho = 0.001$. 

We compare the performance of Algorithm~\ref{alg} with that of the PG method. 
PG uses a stopping criterion based on the KKT condition in \cite{li2010inexact}
\[
  \max\left\{\frac{|P-D|}{1 + |P| + |D|},pinf,dinf\right\} \leq \textrm{gaptol},
  \]
  where $\textrm{gaptol} = 10^{-6}$. 
  The values of $pinf$ and $dinf$ are the residuals of the constraints in the primal and dual problems, respectively, as in \cite{li2010inexact}. For the comparison,
we include Algorithm~\ref{alg} that also employs the same stopping criterion denoted as Algorithm~\ref{alg} (KKT). 
  We note that 
  $pinf = 0$ holds in this experiment
  since the problem  is unconstrained,
  and $dinf = 0$ holds in Algorithm~\ref{alg} because the generated sequence $\{\U^k\}$ is always dual feasible. 
    All algorithms can obtain the result with the relative gap around $10^{-6}$ with these settings.

  Table~\ref{table:exp2} shows the performance comparison between the two algorithms on \eqref{primalexp2-1}. Algorithm~\ref{alg} works well in the large instance. 
  We can also see that the number of iterations that Algorithm~\ref{alg} takes is less than PG. The convergence efficiency of Algorithm~\ref{alg} will be clearer for larger instances. 
	We can see from the experiment with $(n,k) = (6000,50)$ that Algorithm \ref{alg} can give the estimated solution of \eqref{primalexp2-1} in $1602$ seconds, but PG cannot solve it in the computation time limit of 7200 seconds.

\begin{table}\centering \ra{1.3}
	\caption{Numerical results on block $\ell_{\infty}$-regularized log-likelihood minimization problem (OOT is out of time)}
	\label{table:exp2}  
	\addtolength{\tabcolsep}{-1pt}
\scalebox{0.8}{
	\begin{tabular}{@{}rrrrrrrrrrrr@{}}\toprule &\multicolumn{3}{c}{Algorithm~\ref{alg}} & \phantom{abc} &\multicolumn{3}{c}{Algorithm~\ref{alg} (KKT)} & \phantom{abc} &\multicolumn{3}{c}{Projected Gradient}\\ 
			\cmidrule{2-4} \cmidrule{6-8} \cmidrule{10-12}  \multicolumn{1}{c}{$(n,k)$} & Iterations & Time & Gap && Iterations & Time & Gap && Iterations & Time & Gap\\
	(500,10) & 35 & 1.35 & 1.37e-6 && 35 & 1.72 & 1.37e-6 && 59 & 5.40 & 1.72e-6\\ 
	(1000,20) & 36 &  6.39 & 3.54e-7 && 33 & 8.74 & 1.78e-6 && 50 & 23.21 & 1.76e-6\\
	(2000,50) & 42 & 43.26 & 3.08e-6  && 44 & 65.88 & 1.64e-6 && 74 & 180.57 & 1.83e-6\\
	(4000,50) & 62 &  325.59 & 6.16e-6 && 66 & 511.20 & 8.85e-7 && 161 & 1764.43 & 1.85e-6\\
  (6000,50) & 104 &  1602.37 & 3.18e-5 && 131 & 3001.14 & 1.58e-6 &&  & OOT & \\
	\bottomrule 
	\end{tabular}}
	\end{table}

It was shown in \cite{yang2013proximal} that the different norm constraint of \eqref{primalexp2-1} (considering $\max$ function as the $\ell_\infty$-norm) can be better in some cases, thus we further investigate the following synthetic problem:
\begin{align}
	 \min_{\X \in \SMAT^n} & \ \ \f(\X) := \C \bullet \X - \log \det \X + 
   \sum_{1 \le h_1 \le h_2 \le H}
   \lambda^{'}_{h_1 h_2} \left(\sum_{(i,j) \in G_{h_1h_2}} X_{ij}^2\right)^{\frac{1}{2}},\label{primalexp2-3}
\end{align}
when $\lambda^{'}_{h_1 h_2} := \rho |G_{h_1 h_2}|^{\frac{1}{2}}$. This problem is a variant of \eqref{primalexp2-1} which changes the $max$ function to the Frobenious norm.

Table \ref{table:exp2-sub} shows the performance comparison between the two algorithms on \eqref{primalexp2-3}. 
Focusing on the computation time, Algorithm \ref{alg} again performs well in this problem. In the experiment with $(n,k) = (4000,50)$, Algorithm \ref{alg} executed in 163 seconds and reached a relative gap of $4.98 \times 10^{-15}$, while PG executed in 1038 seconds and gives the solution with a relative gap $2.65 \times 10^{-7}$. We can see that Algorithm \ref{alg} takes a shorter time and outputs a highly accurate solution than PG. 

\begin{table}[H]\centering \ra{1.3}
	\caption{Numerical results on \eqref{primalexp3-1}}
	\label{table:exp2-sub}  
	\addtolength{\tabcolsep}{-1pt}
\scalebox{0.8}{
	\begin{tabular}{@{}rrrrrrrrrrrr@{}}\toprule &\multicolumn{3}{c}{Algorithm~\ref{alg}} & \phantom{abc} &\multicolumn{3}{c}{Algorithm~\ref{alg} (KKT)} & \phantom{abc} &\multicolumn{3}{c}{Projected Gradient}\\ 
			\cmidrule{2-4} \cmidrule{6-8} \cmidrule{10-12}  \multicolumn{1}{c}{$(n,k)$} & Iterations & Time & Gap && Iterations & Time & Gap && Iterations & Time & Gap\\
	(500,10) & 15 & 0.78 & 2.48e-15 && 6 & 0.73 & 8.73e-7 && 6 & 2.01 & 7.65e-7\\ 
	(1000,20) & 16 &  3.11 & 3.07e-14 && 7 & 4.05 & 1.20e-6 && 8 & 11.24 & 5.01e-7\\
	(2000,50) & 22 & 20.20 & 3.35e-14  && 13 & 37.80 & 1.21e-6 && 22 & 108.16 & 7.86e-7\\
	(4000,50) & 31 &  163.00 & 4.98e-15 && 17 & 336.43 & 6.05e-7 && 27 & 1037.94 & 2.65e-7\\
	\bottomrule 
	\end{tabular}}
	\end{table}

 \subsection{Multi-task structure learning problem}\label{subsec:Multitask}

 In this experiment, we present the numerical results on the following multi-task structure learning problem \cite[Equation (3)]{honorio2010multi}:
\begin{align}
	 \min_{\X^1 , \dots , \X^K \in \SMAT^n} & \ \ \sum_{k=1}^{K} \left(\C^k \bullet \X^k - \log \det \X^k\right) + \lambda \sum_{i,j=1}^{n}||(X_{ij}^1 , \dots , \X_{ij}^k)||_{\infty},\label{primalexp3-1}
\end{align}
where $\C^1, \dots ,\C^K \in \SMAT^n$. Following the transformation in  \cite[Section 1.2]{yang2013proximal}, we let $\C := \text{diag}(\C_1,\dots,\C_K) \in \SMAT^{nK}$ and $\X = \text{diag}(\X_1,\dots,\X_K) \in \SMAT^{nK}$. By adding the linear constraint to the non-block diagonal elements $\X$, we can modify \eqref{primalexp3-1} into
\begin{align}
	 \min_{\X^1 , \dots , \X^K \in \SMAT^n} & \ \ \C \bullet \X - \log \det \X + \lambda \sum_{i,j=1}^{n}||(X_{ij}^1 , \dots , X_{ij}^K)||_{\infty} \\
  \mbox{s.t.}  & \ \  X_{ij} = 0 \ \forall (i,j) \in \Omega,  \X \succ \O,\label{primalexp3-2}
\end{align}
where $\Omega := \{(i,j) \ | \ 1 \leq i,j \leq nK , |\ceil*{i/n} - \ceil*{j/n}| \geq 1\}$ with $\ceil*{x}$ being the ceiling function that takes the largest integer which does not exceed $x$. 
We set the penalty $\lambda = 0.005$.

Table \ref{table:exp3-1} shows the performance of Algorithm \ref{alg} on \eqref{primalexp3-2} with $K =5, 10$, and $15$. We can observe that  Algorithm~\ref{alg} generates accurate solutions even if we increase the value of $K$. This shows the ability of the Algorithm \ref{alg} to solve the problem with a complicated structure by adapting the objective function.

\begin{table}[H] \centering \ra{1.3}
\caption{Numerical results on Multi-task Structure Learning Problem}
	\label{table:exp3-1} 
	\addtolength{\tabcolsep}{-2pt}
\scalebox{0.8}{
	\begin{tabular}{@{}rrrrrrrrrrrr@{}}\toprule &\multicolumn{3}{c}{$K = 5$} & \phantom{abc} &\multicolumn{3}{c}{$K = 10$} & \phantom{abc} &\multicolumn{3}{c}{$K = 15$}\\ 
			\cmidrule{2-4} \cmidrule{6-8} \cmidrule{10-12}  \multicolumn{1}{c}{$n$} & Iterations & Time & Gap && Iterations & Time & Gap && Iterations & Time & Gap\\
	$100$ & 39 & 7.27 & 2.44e-8 && 39 & 14.73 & 9.79e-9 && 32 & 29.57 & 1.18e-8\\ 
	$200$ & 42 & 34.57 & 1.75e-8 && 35 & 79.87 & 5.47e-9&& 32 & 213.43 & 1.34e-8\\
	$300$ & 56 & 114.12 & 4.93e-6 && 36 & 253.93 & 2.90e-8 && 34 & 672.73 & 2.99e-8\\
	$400$ & 55 & 221.69 & 1.64e-5 && 40 & 609.38 & 6.29e-9 && 34 & 1552.03 & 1.68e-8\\
	\bottomrule 
	\end{tabular}}
	\end{table}

 \section{Conclusion}\label{sec:conclusion}

 In this paper, we addressed the generalized log-det SDP \eqref{primal} that covers many existing optimization models and proposed  Algorithm~\ref{alg} based on DSPG. We show the convergence of Algorithm~\ref{alg} to the optimal value under the mild assumptions (Assumption~\ref{assp}). We also provide the results of numerical experiments on the synthetic problem, the number of components in the extension structure (block-constraint), and their combination (multi-task structure). 
 Algorithm \ref{alg} can obtain accurate solutions on the large instances within the acceptable computation time.


One of the future directions is to incorporate squared terms of the $\ell_p$ norm in the objective function like $\|\QC_h(\X)\|_{p_h}^2$. The combination of the $\ell_1$ norm and the squared $\ell_2$ norm appears in statistics. Another important factor is to discuss the types of projection. In the numerical experiments, the main bottleneck was the Cholesky factorization. We still have some flexibility in choosing the projection if the cost in each iteration is at most $O(n^3)$.

\section*{Data Availability}
The test instances in Section 4 were generated randomly following the steps described in these sections.

\section*{Conflict of Interest}
All authors have no conflicts of interest.

\section*{Acknowledgments}
The research of M.~Y.~was partially supported by JSPS KAKENHI (Grant Number: 21K11767).

\begin{small}
\bibliography{ref.bib}
\end{small}
\end{document}